\newtheorem{proposition}{Proposition}
\providecommand{\U}[1]{\protect\rule{.1in}{.1in}}
\numberwithin{equation}{section}
\newenvironment{AMS}{\noindent{\bf Mathematics Subject Classification:}}
\newtheorem{remark}{Remark}
\newtheorem{theorem}{Theorem}
\newtheorem{example}{Example}
\newtheorem{lemma}[theorem]{Lemma}
\newtheorem{definition}{Definition}
\newenvironment{proof}{\noindent{\bf Proof:}}{\hfill\fbox{}\vspace*{1mm}}
\newcommand{\cred}[1]{{\color{black} #1}}
\begin{document}	
	
	\begin{frontmatter}
		\title{A block $\alpha$-circulant based preconditioned MINRES method for wave equations}
		
		\author{Xue-lei Lin\footnote{School of Science, Harbin Institute of Technology,
				Shenzhen 518055, China. E-mail: linxuelei@hit.edu.cn} \textrm{and}
			Sean Hon\footnote{The Corresponding Author. Department of Mathematics, Hong Kong Baptist University, Kowloon Tong, Hong Kong SAR.
				E-mail: seanyshon@hkbu.edu.hk}}
		
		\begin{abstract}
			In this work, we propose an absolute value block $\alpha$-circulant preconditioner for the minimal residual (MINRES) method to solve an all-at-once system arising from the discretization of wave equations. Motivated by the absolute value block circulant preconditioner proposed in [E. McDonald, J. Pestana, and A. Wathen. SIAM J. Sci. Comput., 40(2):A1012–A1033, 2018], we propose an absolute value version of the block $\alpha$-circulant preconditioner. Since the original block $\alpha$-circulant preconditioner is non-Hermitian in general, it cannot be directly used as a preconditioner for MINRES. Our proposed preconditioner is the first Hermitian positive definite variant of the block $\alpha$-circulant preconditioner for the concerned wave equations, which fills the gap between block $\alpha$-circulant preconditioning and the field of preconditioned MINRES solver. The matrix-vector multiplication of the preconditioner can be fast implemented via fast Fourier transforms. Theoretically, we show that for a properly chosen $\alpha$ the MINRES solver with the proposed preconditioner achieves a linear convergence rate independent of the matrix size. To the best of our knowledge, this is the first attempt to generalize the original absolute value block circulant preconditioner in the aspects of both theory and performance the concerned problem. Numerical experiments are given to support the effectiveness of our preconditioner, showing that the expected optimal convergence can be achieved.
		\end{abstract}
		
		\begin{keyword}
			Block Toeplitz matrix; convergence of MINRES solver; wave equations; absolute value block $\alpha$-circulant preconditioners
			
			\begin{AMS}
				15B05; 65F08; 65F10;  65M22
			\end{AMS}
		\end{keyword}
		
	\end{frontmatter}
	
\section{Introduction}\label{sec:intro}

Over the past few decades, the development of parallel-in-time (PinT) solvers for evolutionary partial differential equations (PDEs) has been an active research area. Various PinT methods have been proposed and, among these, two types of solvers are particularly of interest due to their success for a wide range of equations - the parareal method \cite{LIONS2001661} and the multigrid-reduction-in-time \cite{doi:10.1137/130944230}. For more on the development of such parallel methods for PDEs, we refer readers to {\cite{Bertaccini_Ng_2003,Bertaccini_2000,Bertaccini_Durastante_2018,doi:10.1137/0916050,10.1007/978-3-319-18827-0_50,doi:10.1137/05064607X,doi:10.1137/15M1046605,10.1007/978-3-319-23321-5_3,doi:10.1137/16M1074096}} and the references therein.

Recently, ever since the inception of the preconditioning method developed in \cite{doi:10.1137/16M1062016}, there have been an increasing amount of work on developing effective PinT preconditioners for the all-at-once systems arising from solving evolutionary PDEs (see, e.g., \cite{https://doi.org/10.1002/nla.2386,0x003ab34e,LIN2021110221,HonFungDongSC_2023,hon_SCapizzano_2023}). These methods can be categorized into the so-called all-at-once PDE solvers. In other words, instead of solving the target PDE in a sequential way, they propose a PinT preconditioner for solving the vast all-at-once linear system, which is constructed from stacking all unknowns at each time levels simultaneously. Among these methods, block $\alpha$-circulant preconditioners have attracted much attention due to their superior performance for a large class of PDEs, including heat equations \cite{doi:10.1137/20M1316354}, wave equations \cite{doi:10.1137/19M1309869,s10915-021-01701-x}, and advection-diffusion equations and viscous Burgers' equation \cite{WU2021110076}. However, block $\alpha$-circulant preconditioners are nonsymmetric and, therefore, not Hermitian positive definite (HPD). Consequently, they cannot be directly utilized with symmetric preconditioned Krylov subspace solvers such as the minimal residual (MINRES) method. One of the objectives of this work is to bridge this gap, thereby enabling block $\alpha$-circulant preconditioners to accelerate the convergence rate of the MINRES solver when solving the all-at-once system.

Letting alone the preconditioner used, the MINRES method is only applicable to solving symmetric linear system. Thanks to the symmetrization technique proposed in \cite{McDonald2017}, the all-at-once system can be equivalently converted into a symmetric linear system by applying a reverse-time ordering permutation transformation. In \cite{doi:10.1137/16M1062016}, an absolute value block circulant (ABC) preconditioner was proposed for the symmetrized linear system; this preconditioner is HPD and, therefore, suitable for use with the MINRES solver. The numerical results presented in \cite{doi:10.1137/16M1062016} demonstrate that the MINRES solver, when coupled with the ABC preconditioner, is effective for the symmetrized all-at-once system that arises from parabolic equations. Nevertheless, as will be shown from the numerical examples in Section \ref{sec:numerical} and the explanations in Remarks \ref{p1badpermformremark} \& \ref{c1badperformremark}, MINRES with the ABC preconditioner does not work well for wave equations. Hence, another objective of this work is to improve the performance of the ABC preconditioner.

Inspired by the ABC preconditioning technique, we consider proposing an absolute-value version of the aforementioned block $\alpha$-circulant preconditioner, which is HPD and thus can be used as a preconditioner of MINRES solver. In fact, our proposed absolute value block $\alpha$-circulant (ABAC) preconditioner can be seen as an extension of the ABC preconditioner. In other words, when $\alpha=1$, the ABAC preconditioner reduces to the ABC preconditioner. Like the original block $\alpha$-circulant preconditioner, the implementation of the proposed ABAC preconditioner is PinT.	Theoretically, we show that the MINRES solver with the proposed ABAC preconditioner has a matrix-size independent convergence rate for solving the symmetrized all-at-once linear system from wave equation when $\alpha$ is properly chosen. Numerical results in Section \ref{sec:numerical} also show that the ABAC preconditioner outperforms the ABC preconditioner.

More importantly, to the best of our knowledge, our proposed preconditioning method is the first work to show that a nontrivial symmetric positive definite version of block $\alpha$-circulant matrix can be constructed for the symmetrized the all-at-once system arising from wave equations, providing a positive answer to the open question discussed in \cite[Section~3]{doi:10.1137/19M1309869} that the construction and analysis of such a preconditioner is possible. We emphasize that the construction of our proposed preconditioner, involving the use of matrix square root on $\alpha$-circulant matrices, has rarely been seen in the relevant literature. Through this novel preconditioning idea, we hope to inspire and stimulate a new line of efficient algorithms for solving other more challenging PDEs, further enriching the PinT solver community. 

It is worth noting that the use of ABAC preconditioners has recently been applied to dense block lower triangular Toeplitz systems \cite{LiLinHonWU_2023}. However, we must emphasize that the theoretical results presented in \cite{LiLinHonWU_2023} are not applicable to the wave equation considered in this work. This inapplicability arises because \cite[Assumption 1 (ii)]{LiLinHonWU_2023} is not satisfied by the matrix $\mathcal{T}$ to be defined in \eqref{eq:main_matrix}.

In this work, we are interested in solving the following linear wave equation as a model problem
\begin{equation}\label{eqn:wave}
	\left\{
	\begin{array}{lc}
		u_{tt}(x,t) = \Delta u(x,t)+ f(x,t), & (x,t)\in \Omega \times (0,T], \\
		u = 0, & (x,t)\in \partial \Omega \times (0,T], \\
		u(x,0)=\psi_0,~u_t(x,0)=\psi_1, & x \in \Omega.
	\end{array}
	\right.\,
\end{equation} {For instance, a direct PinT solver was proposed in \cite{Gander_Halpern_etc_2019} for (\ref{eqn:wave}), where the so-called diagonalization technique \cite{Maday_Eonquist_2008} was used. For more about the existing solvers based on this diagonalization technique for solving the wave equation, we refer to the recent review paper \cite{2020arXiv200509158G}.} 

{Unlike a direct solver, we focus on a preconditioned iterative method for the equation in what follows.} For solving (\ref{eqn:wave}), we adopt here the implicit leap-frog finite difference scheme used in \cred{\cite{doi:10.1137/19M1309869,doi:10.1137/19M1289613}}, which was established {and shown in \cite{LiLiuXiao2015} to be unconditionally stable without the requirement of the restrictive Courant-Friedrichs-Lewy (CFL) condition on the mesh step sizes}. \textcolor{black}{Specifically, given a positive integer $n$, we let $\tau=\frac{T}{n}$ be the time step size. Let $h$ denote the step size along each spatial direction and let $m$ denote the number of spatial grid points.} For $k = 1,\dots, n-1$, we have
\[ \frac{\mathbf{u}_m^{(k+1)}-2\mathbf{u}_m^{(k)} + \mathbf{u}_m^{(k-1)} }{\tau^2} =  \Delta_h    \frac{\mathbf{u}_m^{(k+1)} +  \mathbf{u}_m^{(k-1)}}{2}  +  \mathbf{f}^{(k)} ,
\]
where $\Delta_h \in \mathbb{R}^{m\times m}$ is the second-order discrete matrix approximating the Laplacian operator $\Delta$ in (\ref{eqn:wave}), $\mathbf{u}_m^{(k)}\in\mathbb{R}^{m\times 1}$ is an approximation to $u(\mathcal{G}_h,k\tau)$,  $\mathbf{f}^{(k)}=f(\mathcal{G}_h,k\tau)$,  $\mathcal{G}_h$ denoting the spatial grid points arranged in the lexicographic ordering.

Instead of solving the above equations for $\mathbf{u}_m^{(k)}$ sequentially for $k=1,2,\ldots, n-1$, we consider the following equivalent all-at-once system $mn$-by-$mn$ (real) nonsymmetric block Toeplitz system

\begin{equation}\label{eq:main_matrix}
	\underbrace{\begin{bmatrix}
			L &   &  &  & \\
			-2I_{m}  & L    & & & \\
			L &  -2I_{m}  & L  & &  \\
			&   \ddots & \ddots & \ddots &  \\
			&  &  L  & -2I_{m} & L
	\end{bmatrix}}_{=:\mathcal{T}}
	\underbrace{\left[\begin{array}{c}\mathbf{u}_m^{(1)}\\ \mathbf{u}_m^{(2)}\\ \mathbf{u}_m^{(3)}\\ \vdots \\ \mathbf{u}_m^{(n)}
		\end{array} \right]}_{=:\mathbf{u}}
	= \underbrace{\tau^2 \left[\begin{array}{c} \mathbf{f}_m^{(0)}/2+ \Psi_1/\tau + \Psi_0/\tau^2  \\ 
			\mathbf{f}_m^{(1)} - L \Psi_0 /\tau^2\\
			\mathbf{f}_m^{(2)} \\
			\vdots \\
			\mathbf{f}_m^{(n-1)}
		\end{array} \right]}_{=:\mathbf{f}},
\end{equation}where  $L = I_{m}  -  \frac{\tau^2}{2} \Delta_h \in \mathbb{R}^{m\times m}.$ Note that $I_{m}$ denotes the $m \times m$ identity matrix.

Throughout, the discrete negative Laplacian matrix $-\Delta_{h}$ is assumed HPD and hence $L$ is HPD as well. Such an assumption is easily satisfied when a finite difference method is used on a uniform grid. In a more general case where the spatial domain is irregular and a finite element method is used \cite{ElmanSilvesterWathen2004}, the identity matrix $I_{m}$ and $-\Delta_{h}$ are replaced by the mass matrix and stiffness matrix, respectively.

We remark that for solving the wave equation (\ref{eqn:wave}) one can apply other discretization schemes and thus obtain different linear systems other than (\ref{eq:main_matrix}). For example, \cite{https://doi.org/10.1002/nla.2386,0x003ab34e,hon_SCapizzano_2023} concern various time stepping methods while \cite{s10915-021-01701-x} adopts the Numerov method.

Thanks to the symmetrization technique proposed in \cite{McDonald2017}, we consider \cred{to} rewrite \eqref{eq:main_matrix} as the following equivalent symmetric form
\begin{equation}\label{symmetricform}
	\mathcal{Y} \mathcal{T} \mathbf{u} = \mathcal{Y}\mathbf{f},
\end{equation} 
where
\begin{equation}\label{eqn:main_system}
	\mathcal{Y} \mathcal{T}
	=
	\begin{bmatrix}
		&  &  L  & -2I_{m} & L\\
		&   \iddots & \iddots & \iddots &  \\
		L &  -2I_{m}  & L  & &  \\
		-2I_{m}  & L    & & & \\
		L &   &  &  & \\
	\end{bmatrix},
\end{equation}
and $\mathcal{Y} = Y_{n} \otimes I_{m}$ with $Y_{n} \in \mathbb{R}^{n \times n}$ being the anti-identity matrix, i.e., $[Y_n]_{j,k}=1$ if and only if $j+k=n+1$ and $[Y_n]_{j,k}=0$ elsewhere. Clearly, the monolithic matrix $\mathcal{Y} \mathcal{T}$ is symmetric since $L$ is assumed symmetric.

\cred{
For the symmetrized matrix $ \mathcal{Y} \mathcal{T}$ given in (\ref{eqn:main_system}), we know from {\cite[Theorem 3.4]{Ferrari2019}} and previous results in \cite{MazzaPestana2018,MR1671591,MazzaPestana2021} that its eigenvalues can be determined from the so-called generating function of $\mathcal{T}$. According to these results, $ \mathcal{Y} \mathcal{T}$ is symmetric indefinite when $n$ is large enough, which accounts for the use of MINRES instead of a HPD Krylov subspace method like the conjugate gradient method in this work. For complete surveys on generating functions as well as other properties of block multilevel Toeplitz matrices, we refer readers to \cite{MR2108963,Chan:1996:CGM:240441.240445,book-GLT-I,book-GLT-II,MR2376196} and the references therein. 
}

\cred{Proposed in \cite{doi:10.1137/19M1309869}, the following block $\alpha$-circulant preconditioner can be constructed as an approximation to $\mathcal{T}$:
\begin{eqnarray}\label{def:matrix_C_alpha}
	\mathcal{C}_\alpha ={\begin{bmatrix}
			L &   &  &  \alpha L & -2\alpha I_{m}\\
			-2I_{m}  & L    & & &  \alpha L \\
			L &  -2I_{m}  & L  & &  \\
			&   \ddots & \ddots & \ddots &  \\
			&  &  L  & -2I_{m} & L
	\end{bmatrix}}.
\end{eqnarray}
Clearly, $\mathcal{C}_{\alpha}$ is not symmetric and, thus, not HPD. However, Krylov subspace solvers (e.g., MINRES) for symmetric linear systems typically require an HPD preconditioner for maintaining the short-recurrence property. Therefore, $\mathcal{C}_{\alpha}$ is not a suitable preconditioner for the symmetric linear system \eqref{symmetricform}.}

{To address this issue of positive definiteness,  we first note that $\mathcal{C}_{\alpha} $ is diagonalizable for $\alpha\in(0,1)$. To see this, we can rewrite $\mathcal{C}_\alpha$ as
\begin{eqnarray}\label{eqn:C_decom}
	\mathcal{C}_\alpha &=& B_1^{(\alpha)} \otimes L + B_2^{(\alpha)} \otimes (-2I_{m}),
\end{eqnarray}
where 
\begin{equation*}\label{eqn:matricesB1B2}
	B_1^{(\alpha)} = {\begin{bmatrix}
			1 &   &  &  \alpha& 0 \\
			0  & 1    & & &  \alpha  \\
			1 &  0  & 1  & &  \\
			&   \ddots & \ddots & \ddots &  \\
			&  &  1  & 0 & 1
	\end{bmatrix}}, \quad B_2^{(\alpha)} = {\begin{bmatrix}
			0 &   &  &   & \alpha  \\
			1  & 0    & & &    \\
			&  1  &  0 & &  \\
			&    & \ddots & \ddots  &  \\
			&  &    & 1 & 0
	\end{bmatrix}}
	\in 
	\mathbb{R}^{n \times n}
\end{equation*}
According to \cite{BiniLatoucheMeini}, $B_1^{(\alpha)}$ and $B_2^{(\alpha)}$ are diagonalizable as follows
\begin{align}
&B_j^{(\alpha)} = D_{\alpha}^{-1}\mathbb{F}\Lambda_{\alpha,j}\mathbb{F}^{*}D_{\alpha},~j=1,2, \notag\\
&D_{\alpha}={\rm diag}(\alpha^{\frac{i-1}{n}})_{i=1}^{n},\quad \mathbb{F}=\frac{1}{\sqrt{n}}[\theta_n^{(i-1)(j-1)}]_{i,j=1}^{n},\quad\theta_n =\exp\left(\frac{2\pi {\bf i}}{n}\right),\quad{\bf i}=\sqrt{-1},\notag\\
&\Lambda_{\alpha,j}={\rm diag}(\lambda_{j,k}^{(\alpha)})_{k=1}^{n},\quad j=1,2,\quad \lambda_{1,k}^{(\alpha)}=1+\alpha^{\frac{2}{n}}\theta_n^{2(k-1)},\quad \lambda_{2,k}^{(\alpha)}=-2\alpha^{\frac{1}{n}}\theta_n^{k-1}.\notag
\end{align}
With the diagonalization formulas, $\mathcal{C}_{\alpha} $ ($\alpha\in(0,1)$) can be shown block diagonalizable as follows 
\begin{align}
\mathcal{C}_{\alpha}&=[(D_{\alpha}^{-1}\mathbb{F})\otimes I_m](\Lambda_{\alpha,1}\otimes L+\Lambda_{\alpha,2}\otimes I_m)[(\mathbb{F}^{*}D_{\alpha})\otimes I_m].\label{calphaprediagform}
\end{align}
Since $L$ is HPD, $\Lambda_{\alpha,1}\otimes L+\Lambda_{\alpha,2}\otimes I_m$ is clearly diagonalizable. In other words, $\mathcal{C}_{\alpha}$ is similar to a diagonalizable matrix and thus $\mathcal{C}_{\alpha}$ itself is  diagonalizable for $\alpha\in(0,1)$. That means for $\alpha\in(0,1)$ there exists an invertible matrix ${\bf X}$ and a diagonal matrix ${\rm diag}(\mu_i)_{i=1}^{nm}$ such that
\begin{equation*}
\mathcal{C}_{\alpha}={\bf X}^{-1}{\rm diag}(\mu_i)_{i=1}^{nm}{\bf X}.
\end{equation*}
Thus, we can define a matrix square root of $\mathcal{C}_{\alpha}$ for $\alpha\in(0,1)$ as follows
\begin{equation}\label{calphasqrtdiagform}
\mathcal{C}_{\alpha}^{1/2}={\bf X}^{-1}{\rm diag}(\sqrt{\mu_i})_{i=1}^{nm}{\bf X},
\end{equation}
where $\sqrt{\mu_i}$ denotes the principal branch of complex square root of  $\mu_i$.
}

Consequently, we in this work propose the following HPD preconditioner for $\mathcal{Y} \mathcal{T}$: 
\begin{equation}\label{def:matrix_P}
	\mathcal{P}_{\alpha}={({\mathcal{C}_\alpha^{1/2}})^{T}} {\mathcal{C}}_\alpha^{1/2}.
\end{equation}

{
	Then, instead of directly solving $\mathcal{YT}{\bf u}=\mathcal{Y}{\bf f}$, we employ the MINRES solver to solve the following equivalent preconditioned system
	\begin{equation}\label{precsys}
		\mathcal{P}_{\alpha}^{-1}\mathcal{YT}{\bf u}=\mathcal{P}_{\alpha}^{-1}\mathcal{Y}{\bf f}.
	\end{equation}
}

For a real symmetric matrix ${\bf H}$, denote by $|{\bf H}|$, the positive semi-definite matrix obtained by replacing eigenvalues of ${\bf H}$ with its absolute value.

\begin{remark}\label{p1badpermformremark}
	When $\alpha=1$, $\mathcal{P}_{\alpha}={({\mathcal{C}_1^{1/2}})^{T}} \mathcal{C}_1^{1/2}=\sqrt{\mathcal{C}^T_1 \mathcal{C}_1}=|\mathcal{C}_1|$ reduces to the existing ABC preconditioner proposed in \cite{doi:10.1137/16M1062016} for heat equations. {However, as demonstrated by the numerical results in Section \ref{sec:numerical}, $\mathcal{P}_{1}$ does not perform well for wave equations. \cred{The reason is that the eigenvalues of preconditioned matrix by the ABC preconditioner have outliers away from $\{-1,+1\}$ and the number of outliers is of $\mathcal{O}(m)$, which means the iteration number needed for convergence increases as $m$ increases.} This is the motivation for proposing $\mathcal{P}_{\alpha}$ in this work. As will be shown, Theorem \ref{maincvgthm} gives choices of $\alpha\in(0,1)$ so that the MINRES solver for \eqref{precsys} has a linear convergence rate independent of $h$ and $\tau$. With guidance of Theorem \ref{maincvgthm}, results in Section \ref{sec:numerical} show that $\mathcal{P}_{\alpha}$ outperforms  $\mathcal{P}_{1}$ for wave equations.}
\end{remark}

Ultimately, we will show that the preconditioned MINRES solver for the preconditioned system \eqref{precsys} achieves a convergence rate independent of $\tau$ and $h$ and therefore independent of the matrix size. 

This paper is organized as follows. We present our main results in Section \ref{sec:main} in which the convergence of our proposed preconditioner is analyzed.  Numerical tests are given in Section \ref{sec:numerical} to demonstrate the matrix-size independent convergence rate of the proposed preconditioned MINRES solver.

\section{Main results}\label{sec:main}

In this section, we discuss properties of our proposed ABAC preconditioner $\mathcal{P}_{\alpha}$ defined by (\ref{def:matrix_P}) \cred{and the fast implementation for the proposed preconditioned method}.

\cred{

\subsection{Preliminaries for convergence analysis of the preconditioned method}\label{subsec:analysis}
In this subsection, we introduce preliminaries for the linear convergence rate analysis of MINRES solver for the preconditioned system.

Denote by $GL(K)$ the set of all $K\times K$ invertible complex matrices. For a square matrix ${\bf A}$, denote by $\sigma({\bf A})$, the spectrum of ${\bf A}$. Define
\begin{equation*}
\mathcal{Q}(k):=\{{\bf A}\in\mathbb{C}^{K\times K}|\sigma({\bf A})\subset\mathbb{C}\setminus(-\infty,0]\}.
\end{equation*}
Clearly, $\mathcal{Q}(K)\subset GL(K)$. For a complex number $z$, denote by $\Re(z)$ and $\Im(z)$, the real part and the imaginary part of $z$, respectively.

\begin{lemma}\label{calphanonsinglm}
\begin{description}
For any $\alpha\in[0,1)$, it holds  $\mathcal{C}_{\alpha}\in\mathcal{Q}(mn)\subset GL(mn)$.
\end{description}
\end{lemma}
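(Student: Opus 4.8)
The plan is to show that every eigenvalue of $\mathcal{C}_{\alpha}$ lies in the open right half-plane (or at least off the ray $(-\infty,0]$), which gives both the non-singularity and membership in $\mathcal{Q}(mn)$. The starting point is the block-diagonalization formula \eqref{calphaprediagform}: $\mathcal{C}_{\alpha}$ is similar to $\Lambda_{\alpha,1}\otimes L+\Lambda_{\alpha,2}\otimes I_m$, so its spectrum is the union, over $k=1,\dots,n$, of the spectra of the $m\times m$ blocks $\lambda_{1,k}^{(\alpha)}L+\lambda_{2,k}^{(\alpha)}I_m$. Since $L$ is HPD, I can diagonalize $L=U\,\mathrm{diag}(\ell_j)_{j=1}^m\,U^{*}$ with $\ell_j>0$, and then the eigenvalues of $\mathcal{C}_{\alpha}$ are exactly
\[
\mu_{j,k}=\lambda_{1,k}^{(\alpha)}\ell_j+\lambda_{2,k}^{(\alpha)},\qquad j=1,\dots,m,\ k=1,\dots,n.
\]
So the whole problem reduces to a scalar computation: show $\Re(\mu_{j,k})>0$ for all $j,k$ (which certainly implies $\mu_{j,k}\notin(-\infty,0]$).

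Next I would write the relevant quantities explicitly. With $\omega:=\theta_n^{k-1}=e^{2\pi\mathbf{i}(k-1)/n}$ and $\beta:=\alpha^{1/n}\in[0,1)$, we have $\lambda_{1,k}^{(\alpha)}=1+\beta^{2}\omega^{2}$ and $\lambda_{2,k}^{(\alpha)}=-2\beta\omega$, hence
\[
\mu_{j,k}=\ell_j(1+\beta^{2}\omega^{2})-2\beta\omega=\ell_j\bigl(1-\beta\omega\bigr)^{2}+(\ell_j-1)\cdot(\text{lower order?})
\]
— more cleanly, I'd note $\mu_{j,k}=\ell_j(1-\beta\omega)^2 + 2\beta\omega(\ell_j-1)$, but the slickest route is to observe that the scalar $\alpha$-circulant polynomial underlying $\mathcal{C}_\alpha$ evaluated along the relevant nodes is $p(z)=\ell_j - 2\beta\omega z^{?}$... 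To avoid getting bogged down, the concrete claim to verify is: for every $\ell>0$, every $\beta\in[0,1)$, and every $\omega$ on the unit circle, $\Re\bigl(\ell(1+\beta^2\omega^2)-2\beta\omega\bigr)>0$. Writing $\omega=e^{\mathbf{i}\phi}$, the real part is $\ell(1+\beta^2\cos 2\phi)-2\beta\cos\phi=\ell\bigl(1-\beta^2+2\beta^2\cos^2\phi\bigr)-2\beta\cos\phi$. Treating this as a quadratic in $c:=\cos\phi\in[-1,1]$, namely $g(c)=2\ell\beta^2 c^2-2\beta c+\ell(1-\beta^2)$, I would minimize over $c$: the vertex is at $c_*=\tfrac{1}{2\ell\beta}$, and the minimum value is $\ell(1-\beta^2)-\tfrac{1}{2\ell}$ when $c_*\in[-1,1]$, and is attained at an endpoint otherwise. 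This is where some care is needed, since $\ell(1-\beta^2)-\tfrac1{2\ell}$ can be negative for small $\ell$; so the argument must use the constraint $c_*\le 1$, i.e. the interior minimum only occurs when $2\ell\beta\ge 1$, in which case $\ell(1-\beta^2)-\tfrac1{2\ell}\ge \ell(1-\beta^2)-\beta=\ell - \beta(\ell\beta+1)\ge\dots$; and if $2\ell\beta<1$ the minimum is at $c=\pm1$ where $g(\pm1)=2\ell\beta^2\mp2\beta+\ell(1-\beta^2)=\ell(1\mp\beta)^2\mp 2\beta+ \dots$ — in fact $g(1)=\ell(1-\beta)^2+\ell\beta^2-2\beta+\ell\beta^2$? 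I will recompute directly: $g(1)=\ell+\ell\beta^2-2\beta = \ell(1+\beta^2)-2\beta$ and $g(-1)=\ell(1+\beta^2)+2\beta>0$. Since $\ell(1+\beta^2)\ge 2\ell\beta > 2\beta\cdot(2\ell\beta)\ge\dots$ won't quite do it either; the clean fact is $\ell(1+\beta^2)-2\beta \ge 2\beta(\sqrt{\ell\cdot\ell}-1)$ fails, so instead I use $\ell(1+\beta^2)-2\beta=(\sqrt\ell-\tfrac{\beta}{\sqrt\ell})^2\cdot\ell/\ell+\dots$ — the correct completion is $\ell(1+\beta^2)-2\beta = \ell\,(1-\beta/\ell)(\, ?\,)$; I will simply write it as $\tfrac{1}{\ell}\bigl(\ell^2(1+\beta^2)-2\ell\beta\bigr)=\tfrac1\ell\bigl((\ell-\beta)^2+\ell^2\beta^2\bigr)>0$ since $\beta<1$ forces $\ell^2-2\ell\beta+\ell^2\beta^2 = (\ell-\beta)^2 + \ell^2\beta^2 - \beta^2 + \beta^2$...

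I expect the sign analysis above to be the only real obstacle, and the honest fix is to combine the two regimes carefully: I would split into $\beta=0$ (trivial, $\mu=\ell>0$) and $\beta\in(0,1)$, and in the latter case show $g(c)>0$ on $[-1,1]$ by checking that its discriminant-based minimum, restricted to the feasible location of the vertex, stays positive using $\beta<1$ strictly. In short, the skeleton is: (i) reduce to scalars $\mu_{j,k}$ via \eqref{calphaprediagform} and the HPD diagonalization of $L$; (ii) reduce further to positivity of a real quadratic $g(c)$ in $c=\cos\phi$ on $[-1,1]$ with parameters $\ell>0$, $\beta\in[0,1)$; (iii) dispatch the quadratic by case analysis on the location of its vertex, using $\beta<1$. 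The conclusion $\Re(\mu_{j,k})>0$ for all $j,k$ yields $\sigma(\mathcal{C}_\alpha)\subset\{z:\Re z>0\}\subset\mathbb{C}\setminus(-\infty,0]$, hence $\mathcal{C}_\alpha\in\mathcal{Q}(mn)$, and in particular $0\notin\sigma(\mathcal{C}_\alpha)$ so $\mathcal{C}_\alpha\in GL(mn)$.
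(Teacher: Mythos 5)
Your reduction in step (i) — passing through \eqref{calphaprediagform} and the diagonalization of $L$ to the scalars $\mu_{j,k}=\ell_j(1+\beta^2\omega^2)-2\beta\omega$ — matches the paper's proof. But the target you set in steps (ii)--(iii), namely $\Re(\mu_{j,k})>0$ for all $j,k$, is genuinely false, so the "only real obstacle" you flag cannot be patched. With $g(c)=2\ell\beta^2c^2-2\beta c+\ell(1-\beta^2)$, take $\ell=3/2$ (note $\sigma(L)\subset(1,\infty)$, so this is admissible) and $\beta=9/10$: the vertex $c_*=\tfrac{1}{2\ell\beta}=\tfrac{10}{27}$ lies well inside $[-1,1]$, and $g(c_*)=\ell(1-\beta^2)-\tfrac{1}{2\ell}=\tfrac{57}{200}-\tfrac13<0$. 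Since $\beta=\alpha^{1/n}\to1$ as $n\to\infty$ for fixed $\alpha$, and the nodes $\cos(2\pi(k-1)/n)$ become dense in $[-1,1]$, this parameter range is actually realized; so some eigenvalues of $\mathcal{C}_\alpha$ really do have negative real part. (Separately, your weaker hypothesis $\ell>0$ is not enough even at the endpoint $c=1$, where $g(1)=\ell(1+\beta^2)-2\beta$ is negative for small $\ell$; the paper's argument uses $\sigma(L)\subset(1,+\infty)$ there.)

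The repair — and this is what the paper does — is to aim only for the weaker conclusion $\mu\notin(-\infty,0]$, which is all that $\mathcal{Q}(mn)$ requires. Writing $\mu=\ell(1-\beta^2)+2\beta\cos\phi\,[\ell\beta\cos\phi-1]+\mathbf{i}\,2\beta\sin\phi\,[\ell\beta\cos\phi-1]$, the imaginary part vanishes only when $\sin\phi=0$ or $\ell\beta\cos\phi=1$; in every such case one checks directly that $\Re(\mu)>0$ (using $\ell>1$ and $\beta<1$ when $\cos\phi=1$, where $\Re(\mu)=\ell(1+\beta^2)-2\beta=(1-\beta)^2+(\ell-1)(1+\beta^2)>0$, and noting $\Re(\mu)=\ell(1-\beta^2)>0$ when $\ell\beta\cos\phi=1$ or $\cos\phi=0$). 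Whenever $\Im(\mu)\neq0$, the sign of $\Re(\mu)$ is irrelevant. Combining the cases gives $\sigma(\mathcal{C}_\alpha)\subset\mathbb{C}\setminus(-\infty,0]$; the case $\alpha=0$ must be handled separately (as $D_0$ is singular, \eqref{calphaprediagform} does not apply), but there $\mathcal{C}_0=\mathcal{T}$ is block lower triangular with spectrum $\sigma(L)\subset(1,\infty)$.
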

\begin{proof}
	See the proof in Appendix ${\bf A}$.
\end{proof}

}

\begin{remark}\label{c1badperformremark}
	Clearly, Lemma \ref{calphanonsinglm} guarantees that the preconditioner $\mathcal{P}_{\alpha}$ is invertible. Moreover, from its proof, we see that the eigenvalues of {Strang's \cite{Strang1986}} block circulant preconditioner $\mathcal{C}_1$ are of form $\mu=2\cos(\omega_k)[\lambda\cos(\omega_k)-1]+{\bf i}2\sin(\omega_k)[\lambda\cos(\omega_k)-1]$ and some of which are equal to zero when $\lambda\cos(\omega_k)-1=0$. Therefore, such an existing block circulant preconditioner may be singular and thus not applicable in general.
\end{remark}

\cred{
Denote by ${\rm UTT}({\bf v})$, the upper triangular Toeplitz matrix with ${\bf v}$ as its first column.
Define the $i$-th order Jordan block $J_i(\lambda)$ as
\begin{equation*}
	J_i(\lambda):=\begin{cases}
		\lambda,\quad i=1,\\
		\left[
		\begin{array}[c]{cccc}
			\lambda&1&&\\
			&\lambda&\ddots&\\
			&&\lambda&1\\
			&&&\lambda
		\end{array}
		\right]\in\mathbb{C}^{i\times i},\quad i>1.
	\end{cases}
\end{equation*}
\begin{definition}\textnormal{(Matrix function via Jordan canonical form \cite[Definition 1.2]{Higham2008})}\label{matfuncdef}
	Let ${\bf A}={\bf Z}{J}{\bf Z}^{-1}$ be a Jordan canonical form of a matrix ${\bf A}\in\mathbb{C}^{N\times N}$ with
	\begin{equation*}
		J={\rm blockdiag}(J_{n_i}(\lambda_i))_{i=1}^{k},\quad \{\lambda_i\}_{i=1}^{k}\in\sigma({\bf A}),\quad \sum\limits_{i=1}^{k}n_i=N.
	\end{equation*}
	For a function $g$ of one complex variable that is sufficiently smooth on each point of $\sigma({\bf A})$, a matrix function is defined by
	\begin{equation*}
		g({\bf A}):={\bf Z}{\rm blockdiag}(g(J_{n_i}(\lambda_i)))_{i=1}^{k}{\bf Z}^{-1},
	\end{equation*} 
	where $g(J_{n_i}(\lambda_i))$ is an upper triangular Toeplitz matrix given as
	\begin{equation*}
		g(J_{n_i}(\lambda_i))={\rm UTT}\left(\left(\frac{g(\lambda_i)}{0!},\frac{g^{\prime}(\lambda_i)}{1!},...,\frac{g^{(j)}(\lambda_i)}{j!},...,\frac{g^{(n_i-1)}(\lambda_i)}{(n_i-1)!}\right)\right)\in\mathbb{C}^{n_i\times n_i}.
	\end{equation*}
\end{definition}

For $z\in\mathbb{C}$, define $s(z)$ as the principal branch of complex square root of $z$.
\begin{proposition}\textnormal{(\cite[Example 1.16]{petrini2018guide})}\label{psqsmoothprop}
	The principal square root function $s(\cdot)$ is holomorphic on $\mathbb{C}\setminus(-\infty,0]$.
\end{proposition}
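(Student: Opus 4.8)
The plan is to realize $s$ as a composition of standard holomorphic functions on the slit plane $\mathbb{C}\setminus(-\infty,0]$. First I would recall that the principal branch of the logarithm, $\operatorname{Log} z := \ln|z| + {\bf i}\arg z$ with $\arg z\in(-\pi,\pi)$, is holomorphic on $\mathbb{C}\setminus(-\infty,0]$ with $(\operatorname{Log} z)' = 1/z$; on the open slit plane this follows either from the inverse function theorem applied to $\exp$ restricted to the strip $\{w:\Im w\in(-\pi,\pi)\}$, or from a direct Cauchy--Riemann check. Then, by definition of the principal branch, $s(z) = \exp\!\bigl(\tfrac12\operatorname{Log} z\bigr)$, and since $\exp$ is entire, $s$ is holomorphic on $\mathbb{C}\setminus(-\infty,0]$ as a composition of holomorphic maps, with $s'(z) = \tfrac{1}{2z}\exp\!\bigl(\tfrac12\operatorname{Log} z\bigr) = \tfrac{1}{2 s(z)}$ by the chain rule.

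For a self-contained argument one can instead verify holomorphy directly. Writing $z = r e^{{\bf i}\theta}$ with $r>0$ and $\theta\in(-\pi,\pi)$, the principal square root is $s(z) = \sqrt{r}\,e^{{\bf i}\theta/2}$, so its real and imaginary parts are $u = \sqrt{r}\cos(\theta/2)$ and $v = \sqrt{r}\sin(\theta/2)$. The map $(r,\theta)\mapsto(r\cos\theta,\, r\sin\theta)$ is a $C^{\infty}$ diffeomorphism from $(0,\infty)\times(-\pi,\pi)$ onto $\mathbb{C}\setminus(-\infty,0]$, so $u$ and $v$ are $C^{1}$ there, and one checks the polar form of the Cauchy--Riemann equations, $\partial_r u = \tfrac{1}{r}\partial_\theta v$ and $\tfrac{1}{r}\partial_\theta u = -\partial_r v$, by differentiating the closed-form expressions; this yields holomorphy of $s$ on the open slit plane.

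A third, perhaps cleanest, route is the holomorphic inverse function theorem: since $\theta/2\in(-\pi/2,\pi/2)$ one has $\Re\bigl(s(z)\bigr)>0$, and in fact $w\mapsto w^{2}$ is a bijection from the open right half-plane $\{w:\Re w>0\}$ onto $\mathbb{C}\setminus(-\infty,0]$ whose derivative $2w$ never vanishes there; hence its inverse, which is exactly $s$, is holomorphic.

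I do not expect a genuine obstacle, since this is a classical fact (indeed it is quoted verbatim from a textbook). The only point requiring care is the branch cut itself: continuity of $s$ fails precisely on $(-\infty,0]$, because the two one-sided boundary limits at a negative real point differ by a sign, and $s$ is not complex-differentiable at $0$ since the difference quotient $s(z)/z = 1/s(z)$ is unbounded near $0$; thus the domain $\mathbb{C}\setminus(-\infty,0]$ asserted in the statement is sharp. Everything else reduces to recalling that the principal logarithm --- equivalently the polar-to-Cartesian change of variables, equivalently the squaring map restricted to the right half-plane --- is well behaved off the negative real axis.
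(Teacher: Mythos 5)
Your proof is correct; all three routes (composition with the principal logarithm, the polar Cauchy--Riemann check, and the holomorphic inverse function theorem applied to $w\mapsto w^{2}$ on the right half-plane) are valid and standard. The paper itself offers no proof of this proposition --- it is quoted directly from a textbook --- so there is nothing to diverge from; your argument simply supplies the classical justification, and your closing remark that the slit domain is sharp is also accurate.
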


By Definition \ref{matfuncdef} and Proposition \ref{psqsmoothprop}, $s(\cdot)$ as a matrix function is well-defined on $\mathcal{Q}(K)$ for any positive integer $K$.

Denote $\mathbb{C}_+:=\{z\in\mathbb{C}|\Re(z)>0\}$.

\begin{lemma}\textnormal{(\cite[Theorem 1.29]{Higham2008})}\label{psqrtuniqlm}
For ${\bf A}\in \mathcal{Q}(K)$ with any positive integer $K$, 
\begin{description}
\item [(a)] there exists a unique matrix ${\bf B}$ such that ${\bf B}^2={\bf A}$ and $\sigma({\bf B})\subset\mathbb{C}_{+}$;
\item[(b)] the matrix ${\bf B}$ is exactly $s({\bf A})$;
\item[(c)] if, additionally, ${\bf A}$ is real, then $s({\bf A})$ is real.
\end{description}
\end{lemma}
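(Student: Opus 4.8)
The plan is to prove the three parts in the order (a), (b), (c), using only the Jordan-canonical-form construction of matrix functions from Definition~\ref{matfuncdef} together with Proposition~\ref{psqsmoothprop}, which guarantees that $s(\cdot)$ is holomorphic on $\mathbb{C}\setminus(-\infty,0]$ and hence that $s({\bf A})$ is well defined for every ${\bf A}\in\mathcal{Q}(K)$.

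\emph{Existence, and identification with $s({\bf A})$.} I would take ${\bf B}:=s({\bf A})$ as the candidate and invoke two standard properties of primary matrix functions. First, if $f$ and $g$ are holomorphic near $\sigma({\bf A})$ then $(fg)({\bf A})=f({\bf A})\,g({\bf A})$; applying this to the identity $s(z)\,s(z)\equiv z$ on $\mathbb{C}\setminus(-\infty,0]$ yields ${\bf B}^{2}={\bf A}$. Second, the spectral mapping property gives $\sigma({\bf B})=\{\,s(\lambda):\lambda\in\sigma({\bf A})\,\}$, and writing $\lambda=r\,e^{{\bf i}\theta}$ with $\theta\in(-\pi,\pi)$ one has $\Re\big(s(\lambda)\big)=\sqrt{r}\,\cos(\theta/2)>0$, so $\sigma({\bf B})\subset\mathbb{C}_{+}$. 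Thus a matrix with the required properties exists, and once uniqueness is in hand this also settles (b).

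\emph{Uniqueness.} Since $0\in(-\infty,0]$, the matrix ${\bf A}$ is nonsingular. Let ${\bf C}$ be any square root of ${\bf A}$ with $\sigma({\bf C})\subset\mathbb{C}_{+}$. Then ${\bf C}$ commutes with ${\bf A}={\bf C}^{2}$, hence leaves invariant the generalized eigenspace $V_i$ of each eigenvalue $\lambda_i$, on which ${\bf A}$ acts as $\lambda_i I+N_i$ with $N_i$ nilpotent. Every eigenvalue of ${\bf C}|_{V_i}$ squares to $\lambda_i$ and therefore lies in $\{\pm s(\lambda_i)\}$; since $\lambda_i\notin(-\infty,0]$, exactly one of these two numbers lies in $\mathbb{C}_{+}$, namely $s(\lambda_i)$, so the constraint $\sigma({\bf C})\subset\mathbb{C}_{+}$ forces ${\bf C}|_{V_i}$ to have the single eigenvalue $s(\lambda_i)$ (this in particular excludes the non-primary square roots, which would require different branches on distinct Jordan blocks sharing an eigenvalue). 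Writing ${\bf C}|_{V_i}=s(\lambda_i)I+S_i$ with $S_i$ nilpotent, the equation $({\bf C}|_{V_i})^{2}=\lambda_i I+N_i$ rearranges to $S_i\big(2s(\lambda_i)I+S_i\big)=N_i$; as $2s(\lambda_i)I+S_i$ is invertible, a short iteration exploiting the nilpotency of $S_i$ shows this has a unique nilpotent solution, and that solution is exactly $s(\lambda_i I+N_i)-s(\lambda_i)I$ given by the Toeplitz formula of Definition~\ref{matfuncdef}. Hence ${\bf C}|_{V_i}=s({\bf A})|_{V_i}$ on every generalized eigenspace, so ${\bf C}=s({\bf A})$, which proves (a) and (b).

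\emph{Reality.} If ${\bf A}$ is real, I would apply complex conjugation to $s({\bf A})$: then $\big(\overline{s({\bf A})}\big)^{2}=\overline{s({\bf A})^{2}}=\overline{{\bf A}}={\bf A}$ and $\sigma\big(\overline{s({\bf A})}\big)=\overline{\sigma(s({\bf A}))}\subset\overline{\mathbb{C}_{+}}=\mathbb{C}_{+}$, so $\overline{s({\bf A})}$ is again a square root of ${\bf A}$ with spectrum in $\mathbb{C}_{+}$; uniqueness from (a) forces $\overline{s({\bf A})}=s({\bf A})$, i.e.\ $s({\bf A})$ is real. I expect the uniqueness step to be the main obstacle: the blockwise reduction to the nilpotent fixed-point equation $S_i(2s(\lambda_i)I+S_i)=N_i$ and the argument that $\sigma({\bf C})\subset\mathbb{C}_{+}$ pins down the principal branch on each Jordan block need care, whereas existence and reality are short once that machinery is set up.
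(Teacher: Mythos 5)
Your proposal is correct. The paper itself does not prove this lemma at all: its ``proof'' consists of the single sentence deferring to Theorems~1.24 and~1.29 of Higham's book, so any comparison is really between your argument and the standard one behind those cited results --- and yours is a faithful, self-contained reconstruction of it. The existence half (the product rule for primary matrix functions applied to $s(z)s(z)=z$, plus spectral mapping and $\Re\bigl(s(re^{{\bf i}\theta})\bigr)=\sqrt{r}\cos(\theta/2)>0$) is exactly the textbook route. Your uniqueness argument is the more delicate part and you handle it correctly: any square root ${\bf C}$ commutes with ${\bf A}={\bf C}^2$ and hence preserves each generalized eigenspace $V_i$; the constraint $\sigma({\bf C})\subset\mathbb{C}_{+}$ selects $s(\lambda_i)$ over $-s(\lambda_i)$ on each $V_i$ (which is precisely what rules out the non-primary square roots); and the nilpotent equation $S_i(2s(\lambda_i)I+S_i)=N_i$ has a unique nilpotent solution because the recursion $S_i=(2s(\lambda_i))^{-1}(N_i-S_i^2)$ terminates, exhibiting $S_i$ as a universal polynomial in $N_i$. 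The deduction of (c) from uniqueness via $\overline{s({\bf A})}$ is likewise the standard and correct argument. The only thing your write-up buys beyond the paper is self-containedness; conversely, if the lemma is to remain a citation, none of this machinery needs to appear, which is presumably why the authors chose to omit it.
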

\begin{proof}
The results of Lemma \ref{psqrtuniqlm} come from \cite[Theorem 1.24]{Higham2008} and  \cite[Theorem 1.29]{Higham2008}.
\end{proof}

Since $\sigma(\mathcal{T})=\sigma(L)\subset(1,+\infty)\subset\mathbb{C}\setminus(-\infty,0]$, $\mathcal{T}\in\mathcal{Q}(mn)$. That means $s(\mathcal{T})$ is well defined. In the rest of this paper, we use the notation $\mathcal{T}^{\frac{1}{2}}$ to represent  $s(\mathcal{T})$.

}

\begin{lemma}\label{lemma:real_T}
	Let $\mathcal{T} \in \mathbb{R}^{mn \times mn} $ be defined by (\ref{eq:main_matrix}). Then, 
	\begin{description}
		\item[(a)] ${\mathcal{T}}^{1/2}$ is invertible;
		\item[(b)] ${\mathcal{T}}^{1/2}$ is a real-valued matrix;
		\item[(c)] $\mathcal{Y} {\mathcal{T}}^{1/2}$ is symmetric.
	\end{description}
\end{lemma}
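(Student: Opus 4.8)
The plan is to establish (a), (b), (c) in that order, leaning on the structural facts already gathered in the excerpt about matrix square roots on $\mathcal{Q}(K)$.

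For part (a): since $\mathcal{T}\in\mathcal{Q}(mn)$ (noted just before the lemma, because $\sigma(\mathcal{T})=\sigma(L)\subset(1,\infty)$, as $\mathcal{T}$ is block lower triangular with diagonal blocks $L$), Lemma~\ref{psqrtuniqlm} guarantees that $\mathcal{T}^{1/2}=s(\mathcal{T})$ exists and, by part (a) of that lemma, has spectrum in $\mathbb{C}_+$. In particular $0\notin\sigma(\mathcal{T}^{1/2})$, so $\mathcal{T}^{1/2}$ is invertible. Alternatively, one can simply note $(\mathcal{T}^{1/2})^2=\mathcal{T}$ is invertible, hence $\mathcal{T}^{1/2}$ is too; either route is a one-liner.

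For part (b): $\mathcal{T}$ is a real matrix and $\mathcal{T}\in\mathcal{Q}(mn)$, so Lemma~\ref{psqrtuniqlm}(c) applies directly to give that $s(\mathcal{T})=\mathcal{T}^{1/2}$ is real-valued.

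For part (c): this is the only part requiring a short argument rather than a citation. I would show $\mathcal{Y}\mathcal{T}^{1/2}$ is symmetric by showing $\mathcal{T}^{1/2}$ commutes appropriately with $\mathcal{Y}$, or more precisely that $(\mathcal{Y}\mathcal{T}^{1/2})^{T}=\mathcal{Y}\mathcal{T}^{1/2}$. Since $\mathcal{Y}=Y_n\otimes I_m$ is symmetric and an involution ($\mathcal{Y}^2=I$, $\mathcal{Y}^{T}=\mathcal{Y}$), symmetry of $\mathcal{Y}\mathcal{T}^{1/2}$ is equivalent to $\mathcal{Y}\mathcal{T}^{1/2}\mathcal{Y}=(\mathcal{T}^{1/2})^{T}$. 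The known fact (from \eqref{eqn:main_system}, which the excerpt already records) is that $\mathcal{Y}\mathcal{T}$ is symmetric, i.e. $\mathcal{Y}\mathcal{T}\mathcal{Y}=\mathcal{T}^{T}$; equivalently $\mathcal{Y}\mathcal{T}\mathcal{Y}^{-1}=\mathcal{T}^{T}$, so $\mathcal{T}$ and $\mathcal{T}^{T}$ are similar via $\mathcal{Y}$. Now I invoke the general principle that matrix functions respect similarity: for the holomorphic function $s$ and $\mathcal{T}\in\mathcal{Q}(mn)$, $s(\mathcal{Y}\mathcal{T}\mathcal{Y}^{-1})=\mathcal{Y}\,s(\mathcal{T})\,\mathcal{Y}^{-1}$ (this follows from Definition~\ref{matfuncdef}, since a Jordan form of $\mathcal{T}$ conjugated by $\mathcal{Y}$ is a Jordan form of $\mathcal{Y}\mathcal{T}\mathcal{Y}^{-1}$). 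Also $s(\mathcal{T}^{T})=(s(\mathcal{T}))^{T}$, because transposition preserves Jordan structure (or: if ${\bf B}=s(\mathcal{T})$ then ${\bf B}^{T}$ squares to $\mathcal{T}^{T}$ and has spectrum $\sigma({\bf B})\subset\mathbb{C}_+$, so by the uniqueness in Lemma~\ref{psqrtuniqlm}(a)-(b), ${\bf B}^{T}=s(\mathcal{T}^{T})$). Combining, $\mathcal{Y}\mathcal{T}^{1/2}\mathcal{Y}=\mathcal{Y}\,s(\mathcal{T})\,\mathcal{Y}^{-1}=s(\mathcal{Y}\mathcal{T}\mathcal{Y}^{-1})=s(\mathcal{T}^{T})=(\mathcal{T}^{1/2})^{T}$, which is exactly what we needed; hence $\mathcal{Y}\mathcal{T}^{1/2}$ is symmetric.

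The main obstacle is the bookkeeping in part (c): one must be careful that the function $s(\cdot)$ genuinely commutes with the similarity transformation by $\mathcal{Y}$ and with transposition, which is where Definition~\ref{matfuncdef}, Proposition~\ref{psqsmoothprop} (holomorphy of $s$ on $\mathbb{C}\setminus(-\infty,0]$, ensuring $s$ is sufficiently smooth on $\sigma(\mathcal{T})$), and the uniqueness statement in Lemma~\ref{psqrtuniqlm} all get used. Parts (a) and (b) are essentially immediate corollaries of the lemmas already in place.
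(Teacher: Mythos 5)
Your proposal is correct and follows essentially the same route as the paper: parts (a) and (b) via the spectrum of $\mathcal{T}$ and Lemma~\ref{psqrtuniqlm}(c), and part (c) via the identity $\mathcal{Y}\mathcal{T}^{1/2}\mathcal{Y}^{-1}=(\mathcal{Y}\mathcal{T}\mathcal{Y}^{-1})^{1/2}=(\mathcal{T}^{T})^{1/2}=(\mathcal{T}^{1/2})^{T}$. The only difference is cosmetic: the paper cites Higham's Theorem~1.13(b)--(c) for the facts that the principal square root commutes with similarity and with transposition, whereas you justify these directly from the Jordan-form definition and the uniqueness statement in Lemma~\ref{psqrtuniqlm}.
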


\begin{proof}
\cred{
As discussed above, $\sigma(\mathcal{T})\subset(1,+\infty)$. Therefore,  $\sigma({\mathcal{T}}^{1/2})\subset(1,+\infty)$, which proves part (a).

Since $\mathcal{T}$ is real-valued, Part (b) of Lemma \ref{lemma:real_T} follows from Part (c) of Lemma \ref{psqrtuniqlm}.
}

We now show part $(c)$ of Lemma \ref{lemma:real_T}. Since $\mathcal{Y} {\mathcal{T}}$ is symmetric by (\ref{eqn:main_system}) (i.e., $\mathcal{Y} {\mathcal{T}} \mathcal{Y}^{-1} = {\mathcal{T}}^T$) and $(\mathcal{Y} {\mathcal{T}} \mathcal{Y}^{-1})^{1/2} = \mathcal{Y} {\mathcal{T}}^{1/2} \mathcal{Y}^{-1}$ by \cite[Theorem 1.13 (c)]{Higham2008}, we can see that $(\mathcal{Y} {\mathcal{T}} \mathcal{Y}^{-1})^{1/2} = \mathcal{Y} {\mathcal{T}}^{1/2} \mathcal{Y}^{-1}=({\mathcal{T}}^T)^{1/2} = {({\mathcal{T}^{1/2}})^{T}}$, where the last equation follows by \cite[Theorem 1.13 (b)]{Higham2008}. Thus, we readily have shown that $\mathcal{Y} {\mathcal{T}}^{1/2} = {({\mathcal{T}^{1/2}})^{T}} \mathcal{Y}=(\mathcal{Y} {\mathcal{T}}^{1/2})^T$. Part $(c)$ is concluded.
\end{proof}

The following proposition indicates that an ideal preconditioner for $\mathcal{Y}\mathcal{T}$ is the HPD matrix ${({\mathcal{T}^{1/2}})^{T}}{\mathcal{T}}^{1/2}$, which can achieve the optimal two-step convergence when MINRES is used due to the fact that the preconditioned matrix has only two distinct eigenvalues $\pm 1$.

\begin{proposition}\label{Prepo:ideal_P}
	Let $\mathcal{T} \in \mathbb{R}^{mn \times mn} $ be defined by (\ref{eq:main_matrix}). Then, $({({\mathcal{T}^{1/2}})^{T}}{\mathcal{T}}^{1/2})^{-1}\mathcal{Y}\mathcal{T}$ has only $\pm 1$ as eigenvalues.
\end{proposition}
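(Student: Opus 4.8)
The plan is to exhibit an explicit similarity transformation that turns $({({\mathcal{T}^{1/2}})^{T}}{\mathcal{T}}^{1/2})^{-1}\mathcal{Y}\mathcal{T}$ into a matrix whose square is the identity, forcing all its eigenvalues into $\{\pm 1\}$. First I would recall from Lemma \ref{lemma:real_T} that $\mathcal{T}^{1/2}$ is real and invertible, that $\mathcal{Y}\mathcal{T}^{1/2}$ is symmetric, and that $\mathcal{Y}\mathcal{T}=\mathcal{Y}\mathcal{T}^{1/2}\cdot\mathcal{T}^{1/2}$ (using $(\mathcal{T}^{1/2})^2=\mathcal{T}$, legitimate since $\mathcal{T}\in\mathcal{Q}(mn)$). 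The key algebraic observation is that conjugating by $\mathcal{T}^{1/2}$ gives
\[
\mathcal{T}^{1/2}\bigl(({({\mathcal{T}^{1/2}})^{T}}{\mathcal{T}}^{1/2})^{-1}\mathcal{Y}\mathcal{T}\bigr)\mathcal{T}^{-1/2}
= \mathcal{T}^{1/2}({\mathcal{T}}^{1/2})^{-1}({({\mathcal{T}^{1/2}})^{T}})^{-1}\,\mathcal{Y}\,\mathcal{T}^{1/2}
= ({({\mathcal{T}^{1/2}})^{T}})^{-1}(\mathcal{Y}\mathcal{T}^{1/2}),
\]
and since $\mathcal{Y}\mathcal{T}^{1/2}$ is symmetric this equals $({({\mathcal{T}^{1/2}})^{T}})^{-1}(\mathcal{Y}\mathcal{T}^{1/2})^{T} = ({({\mathcal{T}^{1/2}})^{T}})^{-1}({\mathcal{T}^{1/2}})^{T}\mathcal{Y}^{T} = \mathcal{Y}^{T} = \mathcal{Y}$, where I use that $\mathcal{Y}=Y_n\otimes I_m$ is symmetric. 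Thus the preconditioned matrix is similar to $\mathcal{Y}$.

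Then I would finish by noting that $\mathcal{Y}=Y_n\otimes I_m$ with $Y_n$ the anti-identity, which is an involution ($Y_n^2=I_n$), so $\mathcal{Y}^2=I_{mn}$; hence $\sigma(\mathcal{Y})\subseteq\{\pm1\}$, and since similarity preserves the spectrum, $\sigma\bigl(({({\mathcal{T}^{1/2}})^{T}}{\mathcal{T}}^{1/2})^{-1}\mathcal{Y}\mathcal{T}\bigr)\subseteq\{\pm1\}$. (One could add that both $+1$ and $-1$ actually occur when $n\ge 2$, since $Y_n$ has $\lceil n/2\rceil$ eigenvalues equal to $+1$ and $\lfloor n/2\rfloor$ equal to $-1$, though the statement as phrased only requires the inclusion.)

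The only mild subtlety — and the step I would be most careful about — is justifying the factorization $\mathcal{Y}\mathcal{T}=(\mathcal{Y}\mathcal{T}^{1/2})\mathcal{T}^{1/2}$ and the manipulation $\mathcal{T}^{1/2}(\mathcal{T}^{1/2})^{-1}=I$ together with the commuting needed to pull $\mathcal{T}^{1/2}$ through: these rely on $\mathcal{T}^{1/2}$ being a genuine (primary) matrix function of $\mathcal{T}$, so it is a polynomial in $\mathcal{T}$ and hence commutes with $\mathcal{T}$ and with $\mathcal{T}^{-1}$; this is exactly what the earlier setup (Definition \ref{matfuncdef}, Lemma \ref{psqrtuniqlm}, and the invertibility in Lemma \ref{lemma:real_T}(a)) guarantees. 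Everything else is bookkeeping with transposes, and the fact that $\mathcal{Y}$ is a real symmetric involution. I do not anticipate any genuine obstacle; the proof is essentially a one-line similarity once the right conjugator $\mathcal{T}^{1/2}$ is chosen.
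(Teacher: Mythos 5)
Your proof is correct and is essentially the paper's own argument presented in reverse: you conjugate the preconditioned matrix by $\mathcal{T}^{1/2}$ to obtain $\mathcal{Y}$ exactly, whereas the paper simplifies the preconditioned matrix directly to $\mathcal{T}^{-1/2}\mathcal{Y}\mathcal{T}^{1/2}$; both hinge on the identity $\mathcal{Y}\mathcal{T}^{1/2}=(\mathcal{T}^{1/2})^{T}\mathcal{Y}$ from Lemma \ref{lemma:real_T}(c) and conclude via similarity to the real symmetric involution $\mathcal{Y}$.
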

\begin{proof}
	By Lemma (\ref{lemma:real_T}) (a), the preconditioner ${({\mathcal{T}^{1/2}})^{T}}{\mathcal{T}}^{1/2}$ is invertible. By Lemma (\ref{lemma:real_T}) (c), we have $\mathcal{Y} {\mathcal{T}}^{1/2} = {({\mathcal{T}^{1/2}})^{T}} \mathcal{Y}$ which yields $\mathcal{Y} {\mathcal{T}}^{-{1}/{2}} = {({\mathcal{T}^{-1/2}})^{T}} \mathcal{Y}$. Then, we consider the following preconditioned matrix
	\begin{eqnarray*}
		({({\mathcal{T}^{1/2}})^{T}}{\mathcal{T}}^{1/2})^{-1}\mathcal{Y}\mathcal{T} = {\mathcal{T}}^{-{1}/{2}}{({\mathcal{T}^{-1/2}})^{T}} \mathcal{Y}\mathcal{T}&=&{\mathcal{T}}^{-{1}/{2}} \mathcal{Y} {\mathcal{T}}^{-{1}/{2}} \mathcal{T}\\
		&=& {\mathcal{T}}^{-{1}/{2}} \mathcal{Y} {\mathcal{T}}^{-{1}/{2}} {\mathcal{T}^{1/2}} {\mathcal{T}}^{1/2}\\
		&=&{\mathcal{T}}^{-{1}/{2}} \mathcal{Y} {\mathcal{T}}^{1/2}.
	\end{eqnarray*}
	Thus, we have shown that the preconditioned matrix $({({\mathcal{T}^{1/2}})^{T}}{\mathcal{T}}^{1/2})^{-1}\mathcal{Y}\mathcal{T}$ is similar to $\mathcal{Y}$ which is {real-valued, symmetric, and orthogonal}. Hence, it has only $\pm 1$ as eigenvalues.
\end{proof}

In other words, Proposition \ref{Prepo:ideal_P} provides a guide on designing an effective preconditioner for $\mathcal{Y}\mathcal{T}$, since in general ${({\mathcal{T}^{1/2}})^{T}}{\mathcal{T}}^{1/2}$ is computationally expensive to implement. 

In what follows, we shall show that our proposed preconditioner $\mathcal{P}_{\alpha}$ approximates the ideal preconditioner ${({\mathcal{T}^{1/2}})^{T}}{\mathcal{T}}^{1/2}$ in the sense that their difference is a small norm matrix whose magnitude is controlled by the parameter $\alpha$. Its preconditioning effect becomes apparent since $\mathcal{P}_{\alpha}$ gets close to ${({\mathcal{T}^{1/2}})^{T}}{\mathcal{T}}^{1/2}$ as $\alpha$ approaches zero.

Now, we turn our focus on the following lemmas and proposition, which will be used to show our main result. 

For a complex number $z$, denote by $\Re(z)$ and $\Im(z)$, the real part and the imaginary part of $z$, respectively.

Similar to Lemma \ref{lemma:real_T}, the following lemma on $\mathcal{C}_{\alpha}$ is needed.

\begin{lemma}\label{lemma:real_C_alpha}
	Let $\mathcal{C}_{\alpha} \in \mathbb{R}^{mn \times mn} $ be defined by (\ref{def:matrix_C_alpha}) with $0 < \alpha < 1$. Then,
	\begin{description}
		\item[(a)] $\mathcal{C}_{\alpha}^{1/2}$ is a real-valued matrix;
		\item[(b)] $\mathcal{Y} {\mathcal{C}}_{\alpha}^{1/2}$ is symmetric.
	\end{description}
\end{lemma}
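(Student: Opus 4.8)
The plan is to mirror exactly the argument used for Lemma \ref{lemma:real_T}, replacing $\mathcal{T}$ with $\mathcal{C}_{\alpha}$ throughout. The two prerequisites are (i) that $\mathcal{C}_{\alpha}\in\mathcal{Q}(mn)$ so that the principal matrix square root $s(\mathcal{C}_{\alpha})=\mathcal{C}_{\alpha}^{1/2}$ is well defined, and (ii) that $\mathcal{Y}\mathcal{C}_{\alpha}$ is symmetric. The first of these is precisely Lemma \ref{calphanonsinglm} (applied with $\alpha\in(0,1)$), which guarantees $\sigma(\mathcal{C}_{\alpha})\subset\mathbb{C}\setminus(-\infty,0]$. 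The second needs a short verification: from the block structure \eqref{def:matrix_C_alpha} one checks directly that conjugating $\mathcal{C}_{\alpha}$ by the anti-identity $\mathcal{Y}=Y_n\otimes I_m$ reverses the block rows and block columns, and — using $\alpha\cdot\alpha^{-1}=1$ in the wrap-around blocks together with the symmetry of $L$ — produces $\mathcal{C}_{\alpha}^{T}$; equivalently $\mathcal{Y}\mathcal{C}_{\alpha}\mathcal{Y}^{-1}=\mathcal{C}_{\alpha}^{T}$, i.e. $\mathcal{Y}\mathcal{C}_{\alpha}=(\mathcal{Y}\mathcal{C}_{\alpha})^{T}$. (Note $Y_n^2=I_n$ so $\mathcal{Y}^{-1}=\mathcal{Y}$.) One can also observe this more cheaply via the decomposition \eqref{eqn:C_decom}: it suffices that $Y_n B_1^{(\alpha)} Y_n = (B_1^{(\alpha)})^{T}$ and $Y_n B_2^{(\alpha)} Y_n = (B_2^{(\alpha)})^{T}$, which are immediate from the displayed forms of $B_1^{(\alpha)},B_2^{(\alpha)}$.

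With these two facts in hand, part (a) follows immediately: $\mathcal{C}_{\alpha}$ is real and lies in $\mathcal{Q}(mn)$, so by Lemma \ref{psqrtuniqlm}(c) its principal square root $\mathcal{C}_{\alpha}^{1/2}=s(\mathcal{C}_{\alpha})$ is real-valued. For part (b) I would reproduce verbatim the reasoning in the proof of Lemma \ref{lemma:real_T}(c): since $\mathcal{Y}\mathcal{C}_{\alpha}\mathcal{Y}^{-1}=\mathcal{C}_{\alpha}^{T}$, apply \cite[Theorem 1.13(c)]{Higham2008} to get $(\mathcal{Y}\mathcal{C}_{\alpha}\mathcal{Y}^{-1})^{1/2}=\mathcal{Y}\mathcal{C}_{\alpha}^{1/2}\mathcal{Y}^{-1}$, then \cite[Theorem 1.13(b)]{Higham2008} to get $(\mathcal{C}_{\alpha}^{T})^{1/2}=(\mathcal{C}_{\alpha}^{1/2})^{T}$; combining, $\mathcal{Y}\mathcal{C}_{\alpha}^{1/2}\mathcal{Y}^{-1}=(\mathcal{C}_{\alpha}^{1/2})^{T}$, hence $\mathcal{Y}\mathcal{C}_{\alpha}^{1/2}=(\mathcal{C}_{\alpha}^{1/2})^{T}\mathcal{Y}=(\mathcal{Y}\mathcal{C}_{\alpha}^{1/2})^{T}$, which is the claimed symmetry.

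I would flag one subtlety. The cited transformation identities from \cite[Theorem 1.13]{Higham2008} for the principal square root require the similarity transformation not to move eigenvalues across the branch cut $(-\infty,0]$ — here this is automatic because $\mathcal{Y}$ is a fixed similarity and $\sigma(\mathcal{C}_{\alpha})$ already avoids $(-\infty,0]$ by Lemma \ref{calphanonsinglm}, and the principal branch is used consistently on both sides. Also the definition \eqref{calphasqrtdiagform} of $\mathcal{C}_{\alpha}^{1/2}$ via the diagonalization \eqref{calphaprediagform} must be reconciled with the principal-square-root/matrix-function definition: since $\mathcal{C}_{\alpha}$ is diagonalizable for $\alpha\in(0,1)$ and $\sqrt{\cdot}$ in \eqref{calphasqrtdiagform} is taken as the principal branch on each eigenvalue, this agrees with $s(\mathcal{C}_{\alpha})$ from Definition \ref{matfuncdef} (with all Jordan blocks of size one), so no ambiguity arises. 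The only genuine piece of work — and the ``main obstacle'' if there is one — is the bookkeeping check that $\mathcal{Y}\mathcal{C}_{\alpha}\mathcal{Y}^{-1}=\mathcal{C}_{\alpha}^{T}$, i.e. that the $\alpha$ and the wrap-around placement are consistent with the anti-diagonal flip; everything else is a direct transcription of the $\mathcal{T}$ case.
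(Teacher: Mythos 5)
Your proof is correct and follows essentially the same route as the paper: part (a) rests on Lemma \ref{calphanonsinglm} plus the identification of $\mathcal{C}_{\alpha}^{1/2}$ from \eqref{calphasqrtdiagform} with the principal square root $s(\mathcal{C}_{\alpha})$ (the paper does this reconciliation via the uniqueness statement of Lemma \ref{psqrtuniqlm} rather than via the diagonalization, but both are valid), and part (b) transcribes the Higham Theorem 1.13 argument from Lemma \ref{lemma:real_T}(c), exactly as the paper indicates when it says the proof is "similar" and skips it. Your explicit verification that $\mathcal{Y}\mathcal{C}_{\alpha}\mathcal{Y}^{-1}=\mathcal{C}_{\alpha}^{T}$ (via $Y_nB_j^{(\alpha)}Y_n=(B_j^{(\alpha)})^{T}$) supplies the one detail the paper leaves implicit, and it checks out.
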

\begin{proof}
\cred{By Lemma \ref{calphanonsinglm}, $\mathcal{C}_{\alpha}\in\mathcal{Q}(mn)$. It is well-known that the image $s(\mathbb{C}\setminus(-\infty,0])$ is a subset of $\mathbb{C}_{+}$. Then, by definition of $\mathcal{C}_{\alpha}^{1/2}$ and by the fact that $\sigma(\mathcal{C}_{\alpha})\subset \mathbb{C}\setminus(-\infty,0]$,  we have $\sigma(\mathcal{C}_{\alpha}^{1/2})\subset\mathbb{C}_{+}$. Then, Lemma \ref{psqrtuniqlm} implies that $s(\mathcal{C}_{\alpha})=\mathcal{C}_{\alpha}^{1/2}$. Moreover, $\mathcal{C}_{\alpha}$ is clearly real-valued. Hence, $\mathcal{C}_{\alpha}^{1/2}$ is  real-valued  by part (c) of Lemma \ref{psqrtuniqlm}. Part (a) is shown.}

The proof of part (b) of Lemma \ref{lemma:real_C_alpha} is similar to the proof of part (c) of Lemma \ref{lemma:real_T}. We skip it here.
\end{proof}

\subsection{{Convergence analysis of the proposed PMINRES method}}
In this subsection, we show that the MINRES solver for the preconditioned system \eqref{precsys} has a linear convergence rate independent of both $\tau$ and $h$. 

We first provide the following lemma, which will be used to investigate the convergence rate. 

\cred{For a square matrix ${\bf C}$, denote by $\sigma({\bf C})$ the spectrum of ${\bf C}$. For a real symmetric matrix ${\bf H}$, denote by $\lambda_{\min}({\bf H})$ and $\lambda_{\max}({\bf H})$ the minimal and the maximal eigenvalue of ${\bf H}$, respectively.}

\begin{lemma}\label{minrescvglm}\cite[Theorem 6.13]{ElmanSilvesterWathen2004}
	Let ${\bf P}\in\mathbb{R}^{N\times N}$ and ${\bf A}\in\mathbb{R}^{N\times N}$ be a symmetric positive definite matrix and a symmetric nonsingular matrix, respectively. Suppose $\sigma({\bf P}^{-1}{\bf A})\in[-a_1,-a_2]\cup[a_3,a_4]$ with $a_4\geq a_3>0$, $a_1\geq a_2>0$ and $a_1-a_2=a_4-a_3$. Then, the MINRES solver with ${\bf P}$ as a preconditioner for the linear system ${\bf A}{\bf x}={\bf y}\in\mathbb{R}^{N\times 1}$ has a linear convergence as follows
	\begin{equation*}
		||{\bf r}_{k}||_2\leq 2\left(\frac{\sqrt{a_1a_4}-\sqrt{a_2a_3}}{\sqrt{a_1a_4}+\sqrt{a_2a_3}}\right)^{\lfloor k/2\rfloor}||{\bf r}_{0}||_2,
	\end{equation*}
	where ${\bf r}_{k}={\bf P}^{-1}{\bf y}-{\bf P}^{-1}{\bf A}{\bf x}_k$ denotes the residual vector at the $k$th iteration with ${\bf x}_k$ ($k\geq 1$) being the $k$th iterative solution by MINRES; ${\bf x}_0$ denotes an arbitrary real-valued initial guess; $\lfloor k/2\rfloor$ denotes the integer part of $k/2$.
\end{lemma}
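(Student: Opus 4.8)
The plan is to reduce the claim to a scalar polynomial approximation problem on the union of the two spectral intervals and then to exhibit an explicit near-best polynomial. Symmetrizing the preconditioned system by ${\bf P}^{1/2}$ --- that is, passing to $\widetilde{\bf A}={\bf P}^{-1/2}{\bf A}{\bf P}^{-1/2}$ (symmetric, with $\sigma(\widetilde{\bf A})=\sigma({\bf P}^{-1}{\bf A})$) and $\widetilde{\bf r}_k={\bf P}^{1/2}{\bf r}_k={\bf P}^{-1/2}({\bf y}-{\bf A}{\bf x}_k)$ --- MINRES minimizes $\|\widetilde{\bf r}_k\|_2$ over the relevant Krylov space, so by the usual orthogonal-diagonalization argument
\[
\frac{\|{\bf r}_k\|_{\bf P}}{\|{\bf r}_0\|_{\bf P}}=\frac{\|\widetilde{\bf r}_k\|_2}{\|\widetilde{\bf r}_0\|_2}\ \le\ E_k:=\min_{\substack{p\in\Pi_k\\ p(0)=1}}\ \max_{\lambda\in[-a_1,-a_2]\cup[a_3,a_4]}\,|p(\lambda)|,
\]
where $\Pi_k$ denotes the real polynomials of degree $\le k$. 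The essential, purely spectral content is thus the bound $E_{2j}\le 2\big(\tfrac{\sqrt{a_1a_4}-\sqrt{a_2a_3}}{\sqrt{a_1a_4}+\sqrt{a_2a_3}}\big)^{j}$ for $j\ge1$; for odd $k=2j+1$ one reuses the degree-$2j$ polynomial ($E_{2j+1}\le E_{2j}$), and for $k\le1$ the bound only asserts $\|{\bf r}_k\|\le 2\|{\bf r}_0\|$, which holds because MINRES residual norms do not increase. (I would be careful that the factor $2(\cdots)^{\lfloor k/2\rfloor}$ controls the ${\bf P}$-weighted residual norm produced by this symmetrization.)

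For $k=2j$ the key device is the quadratic $w(\lambda)=(\lambda-r)^2$ with $r=\tfrac{a_4-a_1}{2}$. The hypothesis $a_1-a_2=a_4-a_3$ is exactly the identity forcing $\tfrac{a_4-a_1}{2}=\tfrac{a_3-a_2}{2}$, so $r$ may be written either way and, by direct substitution, lies in the gap $(-a_2,a_3)$; since $w$ is monotone on each of $(-\infty,r]\supset[-a_1,-a_2]$ and $[r,\infty)\supset[a_3,a_4]$, it maps \emph{both} intervals onto the \emph{same} interval $[\gamma_-,\gamma_+]$ with $\gamma_-=\tfrac{(a_2+a_3)^2}{4}$, $\gamma_+=\tfrac{(a_1+a_4)^2}{4}$, while $w(0)=r^2=\tfrac{(a_3-a_2)^2}{4}<\gamma_-$. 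With $\phi$ the affine bijection $[\gamma_-,\gamma_+]\to[-1,1]$ and $T_j$ the Chebyshev polynomial of the first kind, I take $p(\lambda)=T_j\!\big(\phi(w(\lambda))\big)/T_j(\xi)$, where $\xi:=\phi(w(0))<-1$; this $p$ has degree $2j$, satisfies $p(0)=1$, and obeys $|p(\lambda)|\le1/|T_j(\xi)|$ on the union, so $E_{2j}\le 1/|T_j(\xi)|$.

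It then remains to evaluate $\xi$. Writing $d:=a_1-a_2=a_4-a_3$, a short computation yields the two identities $\gamma_+-\gamma_-=d(a_2+a_3+d)=a_1a_4-a_2a_3$ and $2w(0)-\gamma_+-\gamma_-=-(a_1a_4+a_2a_3)$, whence $\xi=-\tfrac{a_1a_4+a_2a_3}{a_1a_4-a_2a_3}$. Putting $u=\sqrt{a_1a_4}$, $v=\sqrt{a_2a_3}$ (non-degenerate case $u>v$), one gets $\sqrt{\xi^2-1}=\tfrac{2uv}{u^2-v^2}$ and hence $|\xi|-\sqrt{\xi^2-1}=\tfrac{(u-v)^2}{(u-v)(u+v)}=\tfrac{u-v}{u+v}$; since $|T_j(\xi)|=T_j(|\xi|)\ge\tfrac12\big(|\xi|+\sqrt{\xi^2-1}\big)^{j}$ and $\big(|\xi|+\sqrt{\xi^2-1}\big)^{-1}=|\xi|-\sqrt{\xi^2-1}$, we obtain $E_{2j}\le 2\big(\tfrac{u-v}{u+v}\big)^{j}$, which combined with the first paragraph completes the argument. (If $a_1=a_2$ and $a_4=a_3$ the "intervals" degenerate to points, $p(\lambda)=(1+\lambda/a_1)(1-\lambda/a_4)$ annihilates the spectrum, and the claimed ratio is $0$.)

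The one genuinely non-routine step is discovering $w$: recognizing that two intervals of \emph{equal length} can be folded simultaneously onto a common interval by the square of a suitable linear map, and seeing that the hypothesis $a_1-a_2=a_4-a_3$ is precisely what makes the two natural centerings of $w$ agree --- without it no single quadratic works and one is left with the far weaker single-interval estimate over $[-a_1,a_4]$. The ensuing algebraic reduction to $\gamma_+-\gamma_-=a_1a_4-a_2a_3$ must then be carried out cleanly; the Chebyshev estimate, the odd-$k$ reduction, and the MINRES symmetrization are all standard, the only minor subtlety being which residual norm the final constant $2(\cdots)^{\lfloor k/2\rfloor}$ really bounds.
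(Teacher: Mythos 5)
Your proof is correct, and it is essentially the standard argument behind the cited result: the paper itself offers no proof of this lemma (it simply invokes \cite[Theorem~6.13]{ElmanSilvesterWathen2004}), and your symmetrization-plus-polynomial-min-max reduction, the quadratic fold $w(\lambda)=(\lambda-r)^2$ exploiting $a_1-a_2=a_4-a_3$, and the Chebyshev estimate reproduce exactly the proof given in that reference. You are also right to flag the norm subtlety: the bound produced by this argument controls $\|{\bf r}_k\|_{\bf P}$ (equivalently $\|{\bf y}-{\bf A}{\bf x}_k\|_{{\bf P}^{-1}}$) rather than the plain $2$-norm written in the statement, which is a small imprecision inherited from the paper's phrasing, not a gap in your argument.
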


To apply Lemma \ref{minrescvglm} to \eqref{precsys}, we will first need to investigate the spectrum of the preconditioned matrix $\mathcal{P}_{\alpha}^{-1}\mathcal{YT}$. By matrix similarity, we know that $$
\sigma(\mathcal{P}_{\alpha}^{-1}\mathcal{YT})=\sigma(\mathcal{P}_{\alpha}^{-\frac{1}{2}}\mathcal{YT}\mathcal{P}_{\alpha}^{-\frac{1}{2}}).
$$
Hence, it suffices to study the spectral distribution of $\mathcal{P}_{\alpha}^{-\frac{1}{2}}\mathcal{YT}\mathcal{P}_{\alpha}^{-\frac{1}{2}}$. To this end,
we decompose the preconditioned matrix $\mathcal{P}_{\alpha}^{-\frac{1}{2}}\mathcal{YT}\mathcal{P}_{\alpha}^{-\frac{1}{2}}$ as 
\begin{equation}\label{precmatdecompose}
	\mathcal{P}_{\alpha}^{-{1}/{2}}\mathcal{Y}\mathcal{T}\mathcal{P}_{\alpha}^{-{1}/{2}} = \mathcal{ Q}_{\alpha} - \mathcal{E}_\alpha,
\end{equation}
where
\begin{equation*}
	\mathcal{Q}_{\alpha}=\mathcal{P}_{\alpha}^{-{1}/{2}}\mathcal{Y}\mathcal{C}_{\alpha}\mathcal{P}_{\alpha}^{-{1}/{2}},\quad \mathcal{E}_{\alpha}=\mathcal{P}_{\alpha}^{-{1}/{2}}\mathcal{Y}\mathcal{R}_{\alpha}\mathcal{P}_{\alpha}^{-{1}/{2}},\quad \mathcal{R}_{\alpha}:=\mathcal{C}_{\alpha}-\mathcal{T}.
\end{equation*}

In the lemma below, we will show that $\mathcal{Q}_{\alpha}$ is a real symmetric orthogonal matrix (i.e., $\sigma(\mathcal{Q}_{\alpha})\subset\{-1,1\}$) and that $\mathcal{E}_\alpha$ is a small-norm matrix for a properly chosen small $\alpha$.

\begin{lemma}\label{qalphaorthoglm}
	The matrix $\mathcal{Q}_{\alpha}=\mathcal{P}_{\alpha}^{-{1}/{2}}\mathcal{Y}\mathcal{C}_{\alpha}\mathcal{P}_{\alpha}^{-{1}/{2}}$ is both real symmetric and orthogonal, i.e., $\sigma(\mathcal{Q}_{\alpha})\subset\{-1,1\}$.
\end{lemma}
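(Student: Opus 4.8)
The plan is to mirror the argument of Proposition \ref{Prepo:ideal_P}, with $\mathcal{C}_{\alpha}^{1/2}$ playing the role that $\mathcal{T}^{1/2}$ played there. First I would use the definition $\mathcal{P}_{\alpha}=(\mathcal{C}_{\alpha}^{1/2})^{T}\mathcal{C}_{\alpha}^{1/2}$ together with Lemma \ref{lemma:real_C_alpha}(a), which tells us $\mathcal{C}_{\alpha}^{1/2}$ is real, so $\mathcal{P}_{\alpha}$ is genuinely symmetric; and by Lemma \ref{calphanonsinglm} $\mathcal{C}_{\alpha}\in\mathcal{Q}(mn)$, hence $\mathcal{C}_{\alpha}^{1/2}$ is invertible and $\mathcal{P}_{\alpha}$ is HPD, so $\mathcal{P}_{\alpha}^{-1/2}$ is well defined, real, and symmetric. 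Then I substitute $\mathcal{P}_{\alpha}^{-1/2}=\big((\mathcal{C}_{\alpha}^{1/2})^{T}\mathcal{C}_{\alpha}^{1/2}\big)^{-1/2}$ is awkward to manipulate directly, so instead I work with the conjugation: observe that $\mathcal{Q}_{\alpha}=\mathcal{P}_{\alpha}^{-1/2}\mathcal{Y}\mathcal{C}_{\alpha}\mathcal{P}_{\alpha}^{-1/2}$ is similar (via $\mathcal{P}_{\alpha}^{-1/2}$) to $\mathcal{P}_{\alpha}^{-1}\mathcal{Y}\mathcal{C}_{\alpha}$, and
\[
\mathcal{P}_{\alpha}^{-1}\mathcal{Y}\mathcal{C}_{\alpha}
=(\mathcal{C}_{\alpha}^{1/2})^{-1}(\mathcal{C}_{\alpha}^{-1/2})^{T}\mathcal{Y}\mathcal{C}_{\alpha}
=(\mathcal{C}_{\alpha}^{1/2})^{-1}(\mathcal{C}_{\alpha}^{-1/2})^{T}\mathcal{Y}\mathcal{C}_{\alpha}^{1/2}\mathcal{C}_{\alpha}^{1/2}.
\]
The key algebraic identity I need here is the analogue of Lemma \ref{lemma:real_T}(c) for $\mathcal{C}_{\alpha}$, namely Lemma \ref{lemma:real_C_alpha}(b): $\mathcal{Y}\mathcal{C}_{\alpha}^{1/2}=(\mathcal{C}_{\alpha}^{1/2})^{T}\mathcal{Y}$, which upon inverting gives $\mathcal{Y}(\mathcal{C}_{\alpha}^{1/2})^{-1}=(\mathcal{C}_{\alpha}^{-1/2})^{T}\mathcal{Y}$, equivalently $(\mathcal{C}_{\alpha}^{-1/2})^{T}\mathcal{Y}=\mathcal{Y}(\mathcal{C}_{\alpha}^{1/2})^{-1}$. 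Feeding this in, the middle factor becomes $(\mathcal{C}_{\alpha}^{-1/2})^{T}\mathcal{Y}\mathcal{C}_{\alpha}^{1/2}=\mathcal{Y}(\mathcal{C}_{\alpha}^{1/2})^{-1}\mathcal{C}_{\alpha}^{1/2}=\mathcal{Y}$, so $\mathcal{P}_{\alpha}^{-1}\mathcal{Y}\mathcal{C}_{\alpha}=(\mathcal{C}_{\alpha}^{1/2})^{-1}\mathcal{Y}\mathcal{C}_{\alpha}^{1/2}$.

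Hence $\mathcal{Q}_{\alpha}$ is similar to $(\mathcal{C}_{\alpha}^{1/2})^{-1}\mathcal{Y}\mathcal{C}_{\alpha}^{1/2}$, which is similar to $\mathcal{Y}$; since $\mathcal{Y}=Y_n\otimes I_m$ is real, symmetric, and orthogonal, its eigenvalues lie in $\{-1,1\}$, so $\sigma(\mathcal{Q}_{\alpha})\subset\{-1,1\}$. It remains to upgrade ``similar to a symmetric orthogonal matrix'' to ``$\mathcal{Q}_{\alpha}$ itself is symmetric and orthogonal.'' Orthogonality I get by a direct computation: $\mathcal{Q}_{\alpha}^{T}\mathcal{Q}_{\alpha}=\mathcal{P}_{\alpha}^{-1/2}\mathcal{C}_{\alpha}^{T}\mathcal{Y}\mathcal{P}_{\alpha}^{-1}\mathcal{Y}\mathcal{C}_{\alpha}\mathcal{P}_{\alpha}^{-1/2}$, and using $\mathcal{Y}\mathcal{P}_{\alpha}^{-1}\mathcal{Y}=(\mathcal{Y}\mathcal{C}_{\alpha}^{1/2})^{-1}((\mathcal{C}_{\alpha}^{1/2})^{T}\mathcal{Y})^{-T}$ combined with Lemma \ref{lemma:real_C_alpha}(b) one simplifies $\mathcal{C}_{\alpha}^{T}\mathcal{Y}\mathcal{P}_{\alpha}^{-1}\mathcal{Y}\mathcal{C}_{\alpha}=\mathcal{P}_{\alpha}$, so $\mathcal{Q}_{\alpha}^{T}\mathcal{Q}_{\alpha}=\mathcal{P}_{\alpha}^{-1/2}\mathcal{P}_{\alpha}\mathcal{P}_{\alpha}^{-1/2}=I$. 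Symmetry follows from $\mathcal{Q}_{\alpha}^{T}=\mathcal{P}_{\alpha}^{-1/2}\mathcal{C}_{\alpha}^{T}\mathcal{Y}\mathcal{P}_{\alpha}^{-1/2}$ and the same two identities showing $\mathcal{C}_{\alpha}^{T}\mathcal{Y}=\mathcal{Y}\mathcal{C}_{\alpha}$ after multiplying appropriately — more cleanly, $\mathcal{Y}\mathcal{C}_{\alpha}=\mathcal{Y}\mathcal{C}_{\alpha}^{1/2}\mathcal{C}_{\alpha}^{1/2}=(\mathcal{C}_{\alpha}^{1/2})^{T}\mathcal{Y}\mathcal{C}_{\alpha}^{1/2}$ is symmetric (it is $(\mathcal{C}_{\alpha}^{1/2})^{T}\mathcal{Y}\mathcal{C}_{\alpha}^{1/2}$, manifestly symmetric since $\mathcal{Y}^{T}=\mathcal{Y}$), so $\mathcal{Y}\mathcal{C}_{\alpha}=\mathcal{C}_{\alpha}^{T}\mathcal{Y}$, and then $\mathcal{Q}_{\alpha}^{T}=\mathcal{P}_{\alpha}^{-1/2}\mathcal{C}_{\alpha}^{T}\mathcal{Y}\mathcal{P}_{\alpha}^{-1/2}=\mathcal{P}_{\alpha}^{-1/2}\mathcal{Y}\mathcal{C}_{\alpha}\mathcal{P}_{\alpha}^{-1/2}=\mathcal{Q}_{\alpha}$. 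A real symmetric orthogonal matrix has spectrum in $\{-1,1\}$, which recovers the final claim consistently.

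The main obstacle is bookkeeping with the non-commuting square roots and transposes: one must be careful that $\mathcal{P}_{\alpha}^{-1/2}$ (the symmetric positive definite square root of $\mathcal{P}_{\alpha}^{-1}$) does \emph{not} in general equal $(\mathcal{C}_{\alpha}^{1/2})^{-1}$ or $(\mathcal{C}_{\alpha}^{-1/2})^{T}$ individually, so the cancellations must be routed through $\mathcal{P}_{\alpha}=(\mathcal{C}_{\alpha}^{1/2})^{T}\mathcal{C}_{\alpha}^{1/2}$ and the symmetry identity $\mathcal{Y}\mathcal{C}_{\alpha}^{1/2}=(\mathcal{C}_{\alpha}^{1/2})^{T}\mathcal{Y}$ from Lemma \ref{lemma:real_C_alpha}(b) rather than through ad hoc factor-splitting. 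Everything else — realness, invertibility, and the spectrum of $\mathcal{Y}$ — is immediate from the lemmas already established.
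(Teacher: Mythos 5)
Your proof is correct and follows essentially the same route as the paper: both hinge on $\mathcal{P}_{\alpha}^{-1/2}$ being real symmetric and on the identity $\mathcal{Y}\mathcal{C}_{\alpha}^{1/2}=(\mathcal{C}_{\alpha}^{1/2})^{T}\mathcal{Y}$ from Lemma \ref{lemma:real_C_alpha}(b) to collapse $\mathcal{Q}_{\alpha}^{T}\mathcal{Q}_{\alpha}=\mathcal{Q}_{\alpha}^{2}$ to $\mathcal{P}_{\alpha}^{-1/2}\mathcal{P}_{\alpha}\mathcal{P}_{\alpha}^{-1/2}=I$. The initial similarity-to-$\mathcal{Y}$ argument is a redundant (though valid) detour borrowed from Proposition \ref{Prepo:ideal_P}, and your explicit verification that $\mathcal{Y}\mathcal{C}_{\alpha}=(\mathcal{C}_{\alpha}^{1/2})^{T}\mathcal{Y}\mathcal{C}_{\alpha}^{1/2}$ is symmetric is a small point the paper leaves implicit.
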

\begin{proof}
	First, we show that $\mathcal{ {Q}_{\alpha}}$ is real-valued. Since both $\mathcal{C}_{\alpha}^{-1/2}$ and $({\mathcal{C}_\alpha^{-1/2}})^{T}$ are real-valued matrices by Lemma \ref{lemma:real_C_alpha} (a), $\mathcal{P}_{\alpha}^{-1}=\mathcal{C}_{\alpha}^{-1/2}{({\mathcal{C}_\alpha^{-1/2}})^{T}}$ is also real-valued. Furthermore, $\mathcal{P}_{\alpha}^{-1}$ itself is a normal equation matrix, so it is HPD. By Lemma \ref{calphanonsinglm}, we know that $\mathcal{P}_{\alpha}^{-1}$ is nonsingular. As a result, $\mathcal{P}_{\alpha}^{-{1}/{2}}$ being a matrix square root of a real-valued HPD matrix is also real-valued HPD. Hence, $\mathcal{ {Q}_{\alpha}}=\mathcal{P}_{\alpha}^{-{1}/{2}}\mathcal{Y} \mathcal{C}_{\alpha}\mathcal{P}_{\alpha}^{-{1}/{2}}$, which is a product of real-valued matrices, is real-valued.

	By Lemma \ref{lemma:real_C_alpha} (a) that ${({\mathcal{C}_\alpha^{1/2}})^{T}}\mathcal{Y}=\mathcal{Y}\mathcal{C}_{\alpha}^{1/2}$ and ${({\mathcal{C}_\alpha^{-1/2}})^{T}}\mathcal{Y}=\mathcal{Y}\mathcal{C}_{\alpha}^{-{1}/{2}}$, we have
	\begin{eqnarray*}
		\mathcal{Q}_{\alpha}^{T}\mathcal{Q}_{\alpha}=	\mathcal{Q}_{\alpha}^2 &=& \mathcal{P}_{\alpha}^{-{1}/{2}}\mathcal{Y}\mathcal{C}_{\alpha}\mathcal{P}_{\alpha}^{-1}\mathcal{Y}\mathcal{C}_{\alpha}\mathcal{P}_{\alpha}^{-{1}/{2}}\\
		&=&  \mathcal{P}_{\alpha}^{-{1}/{2}}\mathcal{Y}\mathcal{C}_{\alpha}\mathcal{C}_{\alpha}^{-{1}/{2}}{({\mathcal{C}_\alpha^{-1/2}})^{T}}\mathcal{Y}\mathcal{C}_{\alpha}\mathcal{P}_{\alpha}^{-{1}/{2}}\\
		&=&\mathcal{P}_{\alpha}^{-{1}/{2}}\mathcal{Y}\mathcal{C}_{\alpha}^{1/2}\mathcal{Y}\mathcal{C}_{\alpha}^{1/2}\mathcal{P}_{\alpha}^{-{1}/{2}}\\
		&=&\mathcal{P}_{\alpha}^{-{1}/{2}}{({\mathcal{C}_\alpha^{1/2}})^{T}}\underbrace{\mathcal{Y}^2}_{=I_{mn}}\mathcal{C}_{\alpha}^{1/2}\mathcal{P}_{\alpha}^{-{1}/{2}}\\
		&=&\mathcal{P}_{\alpha}^{-{1}/{2}}\mathcal{P}_{\alpha}\mathcal{P}_{\alpha}^{-{1}/{2}}\\
		&=&I_{mn}.
	\end{eqnarray*}
	Thus, $\mathcal{ Q}_{\alpha}$ is orthogonal. 
	
	Since $\mathcal{Q}_{\alpha}$ is real symmetric and orthogonal, we know that $\sigma(\mathcal{Q}_{\alpha})\subset \mathbb{R}\cap \{z:|z|=1\} =\{-1,1\}$. The proof is complete.
\end{proof}

In what follows, we will estimate $||\mathcal{E}_{\alpha}||_2$.

From the proof of Lemma \ref{qalphaorthoglm}, we know that $\mathcal{P}_{\alpha}^{-\frac{1}{2}}$ is real symmetric. Moreover, since $\mathcal{R}_{\alpha}$ is a real block Toeplitz matrix with symmetric blocks, $\mathcal{Y}\mathcal{R}_{\alpha}$ is real symmetric. Hence, \cred{$\mathcal{E}_{\alpha}=\mathcal{P}_{\alpha}^{-{1}/{2}}\mathcal{Y}\mathcal{R}_{\alpha}\mathcal{P}_{\alpha}^{-{1}/{2}}$} is real symmetric. Let $\rho(\cdot)$ denote the spectral radius of a matrix. We have
\begin{eqnarray}
	||\mathcal{E}_{\alpha}||_2=	\| \mathcal{P}_{\alpha}^{-{1}/{2}}\mathcal{Y} \mathcal{R}_{\alpha} \mathcal{P}_{\alpha}^{-{1}/{2}} \|_2 &=& \rho( \mathcal{P}_{\alpha}^{-{1}/{2}}\mathcal{Y} \mathcal{R}_{\alpha} \mathcal{P}_{\alpha}^{-{1}/{2}}  )\notag\\
	&=&\rho( \mathcal{P}_{\alpha}^{-1}\mathcal{Y}\mathcal{R}_{\alpha} )\notag\\
	&\leq & \| \mathcal{P}_{\alpha}^{-1}\mathcal{Y}\mathcal{R}_{\alpha} \|_2\notag\\
	&=&||\mathcal{C}_{\alpha}^{-1/2}(\mathcal{C}_{\alpha}^{-1/2})^{T}\mathcal{Y}\mathcal{R}_{\alpha}||_2\notag\\
	&\leq&||\mathcal{C}_{\alpha}^{-1/2}(\mathcal{C}_{\alpha}^{-1/2})^{T}||_2||\mathcal{Y}\mathcal{R}_{\alpha}||_2=||\mathcal{C}_{\alpha}^{-1/2}||_2^2||\mathcal{R}_{\alpha}||_2.\label{ralphaesti1}
\end{eqnarray}

\cred{

\begin{lemma}\label{contiofsqfunc}
The principal square root $s(\cdot)$, as a matrix function, is continuous on $\mathcal{Q}(K)$ for any positive integer $K$.
\end{lemma}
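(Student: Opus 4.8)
The plan is to prove continuity locally: fix an arbitrary ${\bf A}_0\in\mathcal{Q}(K)$ and show that $s(\cdot)$ is continuous at ${\bf A}_0$; since ${\bf A}_0$ is arbitrary this yields continuity on all of $\mathcal{Q}(K)$. The main tool is the Cauchy integral representation of the matrix function $s(\cdot)$. Since $s(\cdot)$ is holomorphic on the open set $\mathbb{C}\setminus(-\infty,0]$ by Proposition \ref{psqsmoothprop}, and this set contains the finite (hence compact) spectrum $\sigma({\bf A}_0)$, one can pick a finite union $\Gamma$ of positively oriented closed rectifiable contours, contained in $\mathbb{C}\setminus(-\infty,0]$, enclosing every eigenvalue of ${\bf A}_0$ with winding number one and enclosing no point of $(-\infty,0]$. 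Then the matrix function of Definition \ref{matfuncdef} coincides with the Cauchy integral,
\[
s({\bf A})=\frac{1}{2\pi{\bf i}}\oint_{\Gamma}s(z)\,(zI_K-{\bf A})^{-1}\,dz,
\]
for every matrix ${\bf A}$ whose spectrum is enclosed by $\Gamma$; this is the standard equivalence of the Jordan-form and contour-integral definitions of a matrix function (see \cite[Theorem~1.12]{Higham2008}).

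First I would check that this representation applies uniformly near ${\bf A}_0$. The coefficients of the characteristic polynomial of ${\bf A}$ are polynomials in the entries of ${\bf A}$, and the roots of a monic polynomial depend continuously on its coefficients; hence the spectrum varies continuously with ${\bf A}$, and there is $\delta>0$ such that $\|{\bf A}-{\bf A}_0\|_2<\delta$ forces $\sigma({\bf A})$ into the bounded open region enclosed by $\Gamma$. For such ${\bf A}$ we then have ${\bf A}\in\mathcal{Q}(K)$, $zI_K-{\bf A}$ invertible for all $z\in\Gamma$, and the displayed integral formula valid.

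Next I would estimate $\|s({\bf A})-s({\bf A}_0)\|_2$. By compactness of $\Gamma\times\{{\bf A}:\|{\bf A}-{\bf A}_0\|_2\le\delta/2\}$ and continuity of the resolvent on this set (on which $zI_K-{\bf A}$ is never singular), $M:=\sup\{\|(zI_K-{\bf A})^{-1}\|_2: z\in\Gamma,\ \|{\bf A}-{\bf A}_0\|_2\le\delta/2\}<\infty$. Combining the integral representations for $s({\bf A})$ and $s({\bf A}_0)$ with the resolvent identity $(zI_K-{\bf A})^{-1}-(zI_K-{\bf A}_0)^{-1}=(zI_K-{\bf A})^{-1}({\bf A}-{\bf A}_0)(zI_K-{\bf A}_0)^{-1}$ gives, for $\|{\bf A}-{\bf A}_0\|_2\le\delta/2$,
\[
\|s({\bf A})-s({\bf A}_0)\|_2\le\frac{\ell(\Gamma)}{2\pi}\Big(\max_{z\in\Gamma}|s(z)|\Big)M^2\,\|{\bf A}-{\bf A}_0\|_2,
\]
where $\ell(\Gamma)$ denotes the total arc length and $\max_{z\in\Gamma}|s(z)|<\infty$ because $s$ is continuous on the compact set $\Gamma$. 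Letting ${\bf A}\to{\bf A}_0$ proves continuity at ${\bf A}_0$.

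The point requiring the most care is the first step — justifying that the Jordan-form value $s({\bf A})$ of Definition \ref{matfuncdef} equals the contour integral for every ${\bf A}$ enclosed by $\Gamma$, and choosing $\Gamma$ concretely (e.g.\ small circles, one around each distinct eigenvalue of ${\bf A}_0$, all avoiding the branch cut $(-\infty,0]$). Once that representation is in place, everything else is the routine resolvent estimate above. An alternative, integral-free route would use the characterisation of $s({\bf A})$ in Lemma \ref{psqrtuniqlm}(a)--(b) as the unique square root with spectrum in $\mathbb{C}_{+}$ together with continuity of the eigenstructure, but the Cauchy integral argument is the most economical.
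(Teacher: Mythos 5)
Your argument is correct, but it takes a different route from the paper. The paper's proof is a one-line delegation: since $s(\cdot)$ is holomorphic, hence infinitely differentiable, on the open set $\mathbb{C}\setminus(-\infty,0]$, continuity of ${\bf A}\mapsto s({\bf A})$ on $\mathcal{Q}(K)$ follows directly from the general continuity theorem for matrix functions, \cite[Theorem~1.19]{Higham2008}. You instead reconstruct, essentially from first principles, the analytic case of that theorem: you pass to the Cauchy-integral representation (invoking \cite[Theorem~1.12]{Higham2008} for its equivalence with Definition~\ref{matfuncdef}), use continuity of the spectrum to fix a contour $\Gamma\subset\mathbb{C}\setminus(-\infty,0]$ that works uniformly in a neighbourhood of ${\bf A}_0$, and close with the resolvent identity and a compactness bound. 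All the steps check out — the small circles around the eigenvalues of ${\bf A}_0$ can indeed be chosen with closed disks avoiding the branch cut, which also guarantees that nearby matrices remain in $\mathcal{Q}(K)$ — and your argument actually delivers slightly more than the lemma asks for, namely a local Lipschitz estimate $\|s({\bf A})-s({\bf A}_0)\|_2\le C\|{\bf A}-{\bf A}_0\|_2$. The trade-off is length versus scope: the paper's citation covers the general finitely-differentiable case in one sentence, while your contour argument is self-contained but exploits (and requires) analyticity of $s$ away from the cut.
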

\begin{proof}
By Proposition \ref{psqsmoothprop}, $s(\cdot)$ is holomorphic on $\mathbb{C}\setminus(-\infty,0]$ and thus infinitely differentiable on $\mathbb{C}\setminus(-\infty,0]$. Then, Lemma \ref{contiofsqfunc} is a direct consequence of \cite[Theorem 1.19]{Higham2008}. The proof is complete.
\end{proof}

Define the inverse operation $\iota$ as $\iota:{\bf A}\in GL(K)\rightarrow \iota({\bf A})={\bf A}^{-1}\in GL(K)$ for any positive integer $K$.
\begin{lemma}\label{invopcontilm}
	$\iota(\cdot)$ is a continuous on $\mathcal{Q}(K)$ and  $\iota(\mathcal{Q}(K))\subset \mathcal{Q}(K)$ for any  positive integer $K$ .
\end{lemma}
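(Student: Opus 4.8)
The plan is to prove the two assertions of Lemma~\ref{invopcontilm} separately, both by elementary arguments.

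\textbf{Inclusion.} First I would check that $\iota(\mathcal{Q}(K))\subset\mathcal{Q}(K)$. Fix ${\bf A}\in\mathcal{Q}(K)$. Since $\mathcal{Q}(K)\subset GL(K)$, the matrix ${\bf A}$ is invertible, so $0\notin\sigma({\bf A})$ and $\iota({\bf A})={\bf A}^{-1}$ is well-defined. By the spectral mapping theorem applied to the holomorphic map $z\mapsto z^{-1}$ on a neighbourhood of $\sigma({\bf A})$, one has $\sigma({\bf A}^{-1})=\{\lambda^{-1}:\lambda\in\sigma({\bf A})\}$. It then suffices to observe that $z\mapsto z^{-1}$ maps $\mathbb{C}\setminus(-\infty,0]$ into itself: writing $z=re^{{\bf i}\theta}$ with $r>0$ and $\theta\in(-\pi,\pi)$ gives $z^{-1}=r^{-1}e^{-{\bf i}\theta}$ with $-\theta\in(-\pi,\pi)$, hence $z^{-1}\notin(-\infty,0]$. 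Therefore $\sigma({\bf A}^{-1})\subset\mathbb{C}\setminus(-\infty,0]$, i.e. ${\bf A}^{-1}\in\mathcal{Q}(K)$.

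\textbf{Continuity.} Next I would invoke the classical fact that matrix inversion is continuous on $GL(K)$. Indeed, by Cramer's rule ${\bf A}^{-1}=(\det{\bf A})^{-1}\mathrm{adj}({\bf A})$, where each entry of $\mathrm{adj}({\bf A})$ is a polynomial in the entries of ${\bf A}$ and $\det{\bf A}$ is a polynomial that does not vanish on $GL(K)$; hence every entry of ${\bf A}^{-1}$ is a rational function of the entries of ${\bf A}$ whose denominator is non-zero throughout $GL(K)$, so $\iota$ is continuous on $GL(K)$. Since $\mathcal{Q}(K)\subset GL(K)$, the restriction $\iota|_{\mathcal{Q}(K)}$ is continuous, which is exactly the assertion.

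I expect no genuine obstacle here; both parts are standard. The only step worth a moment's care is the geometric claim that inversion preserves the cut plane $\mathbb{C}\setminus(-\infty,0]$, which the polar-form computation above settles — equivalently, $z^{-1}=\bar z/|z|^{2}$ has argument $-\arg z$, so it is a non-positive real precisely when $z$ is.
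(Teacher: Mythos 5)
Your proposal is correct and follows essentially the same route as the paper: both establish the inclusion by showing that $z\mapsto 1/z$ maps $\mathbb{C}\setminus(-\infty,0]$ into itself (you via polar form, the paper via a Cartesian case analysis on $\Re$ and $\Im$ of $1/z$, which are equivalent computations), and both obtain continuity by restricting the standard continuity of inversion on $GL(K)$ to the subset $\mathcal{Q}(K)$. Your Cramer's-rule justification merely makes explicit the ``well-known'' fact the paper cites.
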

\begin{proof}
	It is well-known that the inverse operation $\iota(\cdot)$ is continuous  on $GL(K)$. Hence, for  $\iota(\cdot)$ is continuous on  $\mathcal{Q}(K)$, a subset of $GL(K)$.
	
	It remains to show that the image of $\mathcal{Q}(K)$, under the mapping $\iota(\cdot)$, is a subset of $\mathcal{Q}(K)$. For any ${\bf A}\in \mathcal{Q}(K)$, it holds
	\begin{equation*}
		\sigma(\iota({\bf A}))=\sigma({\bf A}^{-1})=\left\{\frac{1}{\lambda}\Big|\lambda\in\sigma({\bf A})\right\}.
	\end{equation*}
	That means, for any $\mu\in\sigma(\iota({\bf A}))$, there exists $z\in\sigma({\bf A})$ such that $\mu=\frac{1}{z}$. Rewrite $z$ as $z=z_r+{\bf i}z_i$ with $z_r,z_i\in\mathbb{R}$. Then,
	\begin{equation*}
		\mu=\frac{1}{z}=\frac{1}{z_r+{\bf i}z_i}=\frac{z_r-{\bf i}z_i}{z_r^2+z_i^2}.
	\end{equation*}
	As ${\bf A}\in\mathcal{Q}(K)$, $z\in\mathbb{C}\setminus(-\infty,0]$ by definition of $\mathcal{Q}(K)$. 
	
	If $z_i=0$, then $z_r+{\bf i}z_i\in \mathbb{C}\setminus(-\infty,0]$ implies that $z_r>0$. In such case $\Re(\mu)=\frac{z_r}{z_r^2+z_i^2}>0$, which means $\mu\in \mathbb{C}\setminus(-\infty,0]$.
	
	If $z_i\neq 0$, then $\Im(\mu)=-\frac{z_i}{z_r^2+z_i^2}\neq 0$, which means $\mu\in \mathbb{C}\setminus(-\infty,0]$.
	
	To conclude, $\mu\in\mathbb{C}\setminus(-\infty,0]$. By the generality of $\mu$ in $\sigma(\iota({\bf A}))$, we know that  $\sigma(\iota({\bf A}))\subset \mathbb{C}\setminus(-\infty,0]$ and thus $\iota({\bf A})\in \mathcal{Q}(K)$. That means for any  ${\bf A}\in \mathcal{Q}(K)$, it holds $\iota({\bf A})\in \mathcal{Q}(K)$. In other words, $\iota(\mathcal{Q}(K))\subset \mathcal{Q}(K)$. The proof is complete.
\end{proof}

\begin{lemma}\label{mainlm}
Denote $r(\alpha):=||s(\mathcal{C}_{\alpha}^{-1})||_2^2$ for $\alpha\in[0,1)$. Then, $r(\cdot)$ is a continuous function on $[0,1)$.
\end{lemma}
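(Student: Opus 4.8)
The plan is to exhibit $r(\alpha)=\|s(\mathcal{C}_\alpha^{-1})\|_2^2$ as a composition of continuous maps on $[0,1)$. First I would show that the assignment $\alpha\mapsto\mathcal{C}_\alpha$ is continuous from $[0,1)$ into $\mathbb{C}^{mn\times mn}$: indeed, by the decomposition $\mathcal{C}_\alpha=B_1^{(\alpha)}\otimes L+B_2^{(\alpha)}\otimes(-2I_m)$, each entry of $\mathcal{C}_\alpha$ is either constant or a scalar multiple of $\alpha$, so the map is (affine and hence) continuous, and moreover $\mathcal{C}_0$ is obtained by continuity as the block lower-triangular Toeplitz matrix (the $\alpha=0$ corner blocks vanish). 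By Lemma \ref{calphanonsinglm}, $\mathcal{C}_\alpha\in\mathcal{Q}(mn)\subset GL(mn)$ for every $\alpha\in[0,1)$, so the image of this continuous map lies in $\mathcal{Q}(mn)$.

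Next I would chain the three continuity facts already available. By Lemma \ref{invopcontilm}, the inversion map $\iota$ is continuous on $\mathcal{Q}(mn)$ and maps $\mathcal{Q}(mn)$ into itself; hence $\alpha\mapsto\mathcal{C}_\alpha^{-1}=\iota(\mathcal{C}_\alpha)$ is a continuous map from $[0,1)$ into $\mathcal{Q}(mn)$. By Lemma \ref{contiofsqfunc}, the matrix function $s(\cdot)$ is continuous on $\mathcal{Q}(mn)$, so $\alpha\mapsto s(\mathcal{C}_\alpha^{-1})$ is continuous from $[0,1)$ into $\mathbb{C}^{mn\times mn}$. Finally, the spectral norm $\|\cdot\|_2$ is continuous on $\mathbb{C}^{mn\times mn}$ (it is $1$-Lipschitz), and squaring is continuous on $\mathbb{R}$; composing all of these gives that $r(\alpha)=\|s(\mathcal{C}_\alpha^{-1})\|_2^2$ is continuous on $[0,1)$.

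There is essentially no hard step here: the proof is a bookkeeping exercise in composing continuous maps, with all the substantive ingredients (continuity of $\iota$, continuity of $s(\cdot)$, and the fact that $\mathcal{C}_\alpha$ stays inside the "good" domain $\mathcal{Q}(mn)$) already supplied by Lemmas \ref{calphanonsinglm}, \ref{contiofsqfunc}, and \ref{invopcontilm}. The only point that warrants a word of care is the endpoint $\alpha=0$: one must confirm that $\mathcal{C}_0$ still lies in $\mathcal{Q}(mn)$ so that $s(\mathcal{C}_0^{-1})$ is defined and the domain of the composed map is all of $[0,1)$ — but this is exactly the content of Lemma \ref{calphanonsinglm}, which is stated for $\alpha\in[0,1)$. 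With that in hand, the composition argument closes the proof.
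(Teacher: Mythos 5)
Your proposal is correct and follows essentially the same route as the paper: both write $r$ as the composition $\|\cdot\|_2^2\circ s\circ\iota\circ M$ with $M(\alpha)=\mathcal{C}_\alpha$, and chain the continuity of the affine map $M$, of $\iota$ on $\mathcal{Q}(mn)$ (Lemma \ref{invopcontilm}), of $s$ on $\mathcal{Q}(mn)$ (Lemma \ref{contiofsqfunc}), and of the norm, with Lemma \ref{calphanonsinglm} guaranteeing the intermediate images stay in $\mathcal{Q}(mn)$. No substantive difference from the paper's argument.
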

\begin{proof}
Clearly, $r(\alpha)$ can be written in a composite form: $r(\alpha)=||s\circ \iota\circ M(\alpha)||_2^2$, where $M(\cdot):\alpha\in[0,1)\rightarrow M(\alpha)=\mathcal{C}_{\alpha}\in\mathcal{Q}(mn)$. To show the continuity of $r(\cdot)$, it suffices to show the continuity of $M(\cdot)$ on $[0,1)$, the continuity of $\iota(\cdot)$ on the image $M([0,1))$, the continuity of $s(\cdot)$ on the image $\iota(M([0,1)))$, and the continuity of $||\cdot||_2^2$ on the image   $s(\iota(M([0,1))))$.

$M(\alpha)$ is an affine operator with respect to $\alpha$ and it is clearly continuous on $[0,1)$.

By Lemma \ref{calphanonsinglm}, $M([0,1))\subset\mathcal{Q}(mn)$. Lemma \ref{invopcontilm} shows that $\iota$ is continuous on $\mathcal{Q}(mn)$ and thus is continuous on the subset $M([0,1))$. 

By Lemma \ref{invopcontilm}, $\iota(M([0,1)))\subset\iota(\mathcal{Q}(mn))\subset\mathcal{Q}(mn)$. Lemma \ref{contiofsqfunc} shows that $s(\cdot)$ is continuous on $\mathcal{Q}(mn)$ and thus is continuous on the subset $\iota(M([0,1)))$.

It is well-known that $||\cdot||_2^2$ is continuous on the whole space $\mathbb{C}^{mn\times mn}$. Hence, $||\cdot||_2^2$ is continuous on $s(\iota(M([0,1))))$.

To conclude, $r(\cdot)$ is continuous on $[0,1)$. The proof is complete.
\end{proof}

\begin{lemma}\label{normcalphainvsqbdlm}
\begin{equation*}
c_0:=\sup\limits_{\alpha\in(0,1/2]}||\mathcal{C}_{\alpha}^{-1/2}||_2^2<+\infty.
\end{equation*}
\end{lemma}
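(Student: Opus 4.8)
The plan is to observe that, up to the already-proven continuity in Lemma \ref{mainlm}, this is just boundedness of a continuous function on a compact set. The only thing that needs a short argument is the identification $\|\mathcal{C}_{\alpha}^{-1/2}\|_2^2 = r(\alpha)$, i.e. that $\mathcal{C}_{\alpha}^{-1/2} = s(\mathcal{C}_{\alpha}^{-1})$.

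First I would establish this identification for every $\alpha \in (0,1/2]$. By Lemma \ref{calphanonsinglm}, $\mathcal{C}_{\alpha} \in \mathcal{Q}(mn)$, and by Lemma \ref{invopcontilm}, $\mathcal{C}_{\alpha}^{-1} \in \mathcal{Q}(mn)$ as well, so both $s(\mathcal{C}_{\alpha})$ and $s(\mathcal{C}_{\alpha}^{-1})$ are well-defined. As in the proof of Lemma \ref{lemma:real_C_alpha}, $\mathcal{C}_{\alpha}^{1/2} = s(\mathcal{C}_{\alpha})$ with $\sigma(s(\mathcal{C}_{\alpha})) \subset \mathbb{C}_{+}$, hence $\mathcal{C}_{\alpha}^{-1/2} = (s(\mathcal{C}_{\alpha}))^{-1}$. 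The matrix $(s(\mathcal{C}_{\alpha}))^{-1}$ squares to $\mathcal{C}_{\alpha}^{-1}$, and since $\Re(1/\lambda) = \Re(\lambda)/|\lambda|^2 > 0$ whenever $\Re(\lambda) > 0$, we get $\sigma((s(\mathcal{C}_{\alpha}))^{-1}) \subset \mathbb{C}_{+}$; by the uniqueness in Lemma \ref{psqrtuniqlm}(a)--(b) (equivalently \cite[Theorem 1.13]{Higham2008}), $(s(\mathcal{C}_{\alpha}))^{-1} = s(\mathcal{C}_{\alpha}^{-1})$. Therefore $\|\mathcal{C}_{\alpha}^{-1/2}\|_2^2 = \|s(\mathcal{C}_{\alpha}^{-1})\|_2^2 = r(\alpha)$ for all $\alpha \in (0,1/2]$.

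Next I would note that the closed interval $[0,1/2]$ is compact and contained in $[0,1)$, on which $r(\cdot)$ is continuous by Lemma \ref{mainlm}. A continuous real-valued function on a compact set attains its maximum, so $M := \max_{\alpha \in [0,1/2]} r(\alpha) < +\infty$. Since $(0,1/2] \subset [0,1/2]$, we conclude $c_0 = \sup_{\alpha \in (0,1/2]} \|\mathcal{C}_{\alpha}^{-1/2}\|_2^2 = \sup_{\alpha \in (0,1/2]} r(\alpha) \le M < +\infty$, which is the claim.

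The main (and really the only) obstacle is the square-root identity $\mathcal{C}_{\alpha}^{-1/2} = s(\mathcal{C}_{\alpha}^{-1})$; everything else is immediate from compactness of $[0,1/2]$ and Lemma \ref{mainlm}. An alternative that sidesteps this identity entirely is to work with $r(\alpha) := \|\mathcal{C}_{\alpha}^{-1/2}\|_2^2$ from the outset and prove its continuity on $[0,1)$ via the same chain of continuous maps used in the proof of Lemma \ref{mainlm} (the affine map $\alpha \mapsto \mathcal{C}_{\alpha}$, inversion on $GL(mn)$, $s(\cdot)$ on $\mathcal{Q}(mn)$, and the spectral norm), and then apply compactness as above.
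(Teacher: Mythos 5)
Your proposal is correct and follows essentially the same route as the paper: the paper likewise establishes $\mathcal{C}_{\alpha}^{-1/2}=s(\mathcal{C}_{\alpha}^{-1})$ by showing both matrices square to $\mathcal{C}_{\alpha}^{-1}$ and have spectrum in $\mathbb{C}_{+}$ (using $\Re(1/\lambda)>0$ for $\Re(\lambda)>0$ and the uniqueness in Lemma \ref{psqrtuniqlm}), and then invokes continuity of $r(\cdot)$ on the compact set $[0,1/2]$ from Lemma \ref{mainlm}. No gaps.
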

\begin{proof}	
Recall from Lemma \ref{calphanonsinglm} that $\mathcal{C}_{\alpha}\in\mathcal{Q}(mn)$ for each $\alpha\in(0,1)$. Then, by Lemma \ref{invopcontilm}, we have $\mathcal{C}_{\alpha}^{-1}=\iota(\mathcal{C}_{\alpha})\in\mathcal{Q}(mn)$ for each $\alpha\in(0,1)$. Hence, by Lemma \ref{psqrtuniqlm}, we see that there exists a unique matrix ${\bf B}$ such that ${\bf B}^2=\mathcal{C}_{\alpha}^{-1}$ and $\sigma({\bf B})\subset\mathbb{C}_{+}$.

In fact, both $\mathcal{C}_{\alpha}^{-1/2}$ and $s(\mathcal{C}_{\alpha}^{-1})$ are valid candidates of ${\bf B}$. To see this, we firstly note that $(\mathcal{C}_{\alpha}^{-1/2})^2=\mathcal{C}_{\alpha}^{-1}$. Moreover, it is well-known that the image $s(\mathbb{C}\setminus(-\infty,0])$ is a subset of $\mathbb{C}_{+}$. Then, by definition of $\mathcal{C}_{\alpha}^{1/2}$ and by the fact that $\sigma(\mathcal{C}_{\alpha})\subset \mathbb{C}\setminus(-\infty,0]$, we see that $\sigma(\mathcal{C}_{\alpha}^{1/2})\subset \mathbb{C}_{+}$. That means for each $\mu\in\sigma(\mathcal{C}_{\alpha}^{-1/2})$, there exists $\lambda\in\sigma(\mathcal{C}_{\alpha}^{1/2})\subset \mathbb{C}_{+}$ such that $\mu=\frac{1}{\lambda}$. Rewrite $\lambda$ as $\lambda=\lambda_r+{\bf i}\lambda_i$ with $\lambda_r,\lambda_i\in\mathbb{R}$. Then,
\begin{equation*}
\mu=\frac{1}{\lambda}=\frac{\lambda_r-{\bf i}\lambda_i}{\lambda_r^2+\lambda_i^2}.
\end{equation*}
Since $\lambda\in\mathbb{C}_{+}$, $\lambda_r>0$. Hence, $\Re(\mu)=\frac{\lambda_r}{\lambda_r^2+\lambda_i^2}>0$. That means $\mu\in\mathbb{C}_{+}$. By the generality of $\mu$ in $\sigma(\mathcal{C}_{\alpha}^{-1/2})$, we know that $\sigma(\mathcal{C}_{\alpha}^{-1/2})\subset \mathbb{C}_{+}$. Hence, $\mathcal{C}_{\alpha}^{-1/2}$ is a valid candidate of ${\bf B}$.

On the other hand, by Lemma \ref{psqrtuniqlm}, we know that $s(\mathcal{C}_{\alpha}^{-1})$ is also a valid candidate of matrix ${\bf B}$. Then, the uniqueness of ${\bf B}$ implies that  $s(\mathcal{C}_{\alpha}^{-1})=\mathcal{C}_{\alpha}^{-1/2}$ for each $\alpha\in(0,1)$.

Therefore, $r(\alpha)=||s(\mathcal{C}_{\alpha}^{-1})||_2^2=||\mathcal{C}_{\alpha}^{-1/2}||_2^2$ for each $\alpha\in(0,1)$. Therefore,
\begin{equation*}
\sup\limits_{\alpha\in(0,1/2]}||\mathcal{C}_{\alpha}^{-1/2}||_2^2=\sup\limits_{\alpha\in(0,1/2]}r(\alpha)\leq \sup\limits_{\alpha\in[0,1/2]}r(\alpha).
\end{equation*}
As shown in Lemma \ref{mainlm}, we know that $r(\cdot)$ is continuous on the compact set $[0,1/2]$. A continuous function on a compact set has a finite upper bound. Therefore,
\begin{equation*}
\sup\limits_{\alpha\in(0,1/2]}||\mathcal{C}_{\alpha}^{-1/2}||_2^2\leq  \sup\limits_{\alpha\in[0,1/2]}r(\alpha)<+\infty.
\end{equation*}
The proof is complete.
\end{proof}


}

\begin{definition}\label{continuoufunc}
A function $g:\mathbb{X}\rightarrow\mathbb{Y}$ between two topological spaces $\mathbb{X}$ and $\mathbb{Y}$ is said to be continuous if and only if for every open set $\mathbb{V}\subset\mathbb{Y}$, the inverse image $$g^{-1}(\mathbb{V}):=\{X\in\mathbb{X}|g(X)\in\mathbb{V}\},$$ is an open subset of $\mathbb{X}$.
\end{definition}
\begin{lemma}\label{eigenperturblm}\cite[Corollary 6.3.4]{horn2012matrix}
	For $N\times N$ Hermitian matrices ${\bf H}$ and ${\bf E}$, if $\hat{\lambda}$ is an eigenvalue of ${\bf H}+{\bf E}$, then there exists an eigenvalue $\lambda$ of ${\bf H}$ such that
	\begin{equation*}
		|\lambda-\hat{\lambda}|\leq ||{\bf E}||_2.
	\end{equation*} 
\end{lemma}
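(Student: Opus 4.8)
The plan is to reduce the statement to Weyl's monotonicity inequality for the eigenvalues of Hermitian matrices. Since ${\bf H}$ and ${\bf H}+{\bf E}$ are both Hermitian, all their eigenvalues are real; arrange them in nonincreasing order as $\lambda_1\geq\cdots\geq\lambda_N$ and $\hat\lambda_1\geq\cdots\geq\hat\lambda_N$, respectively. The Courant--Fischer min-max characterization then yields, for every index $k$,
\[
\lambda_k+\lambda_{\min}({\bf E})\leq\hat\lambda_k\leq\lambda_k+\lambda_{\max}({\bf E}).
\]
Hence $|\hat\lambda_k-\lambda_k|\leq\max\{|\lambda_{\min}({\bf E})|,\,|\lambda_{\max}({\bf E})|\}=||{\bf E}||_2$, the last equality holding because ${\bf E}$ is Hermitian, so its spectral norm equals its spectral radius. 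Given any eigenvalue $\hat\lambda$ of ${\bf H}+{\bf E}$, one writes $\hat\lambda=\hat\lambda_k$ for a suitable $k$ and sets $\lambda:=\lambda_k\in\sigma({\bf H})$; then $|\lambda-\hat\lambda|\leq||{\bf E}||_2$, which is exactly the assertion.

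A second, more self-contained route bypasses the ordering argument entirely. If $\hat\lambda\in\sigma({\bf H})$ there is nothing to prove, so assume $\hat\lambda\notin\sigma({\bf H})$, so that ${\bf H}-\hat\lambda{\bf I}$ is invertible. Let ${\bf v}$ be a unit eigenvector of ${\bf H}+{\bf E}$ associated with $\hat\lambda$; from $({\bf H}-\hat\lambda{\bf I}){\bf v}=-{\bf E}{\bf v}$ one obtains $1=||{\bf v}||_2\leq||({\bf H}-\hat\lambda{\bf I})^{-1}||_2\,||{\bf E}||_2$. Because ${\bf H}$ is Hermitian, hence unitarily diagonalizable, one has $||({\bf H}-\hat\lambda{\bf I})^{-1}||_2=1/{\rm dist}(\hat\lambda,\sigma({\bf H}))$, so ${\rm dist}(\hat\lambda,\sigma({\bf H}))\leq||{\bf E}||_2$; taking $\lambda\in\sigma({\bf H})$ that attains this distance finishes the argument.

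The only delicate point is the input being invoked — either Weyl's inequality via Courant--Fischer in the first route, or the identity $||({\bf H}-z{\bf I})^{-1}||_2=1/{\rm dist}(z,\sigma({\bf H}))$ for normal matrices in the second — both of which rest on the spectral theorem for Hermitian matrices; everything else is elementary bookkeeping. Since the statement is literally Corollary 6.3.4 of Horn and Johnson, one may of course simply cite it, but the short Weyl argument keeps the exposition self-contained. I would present the first route as the main line of proof, as it is the shortest and makes the dependence on $||{\bf E}||_2$ most transparent.
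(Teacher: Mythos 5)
Your proposal is correct. The paper itself offers no proof of this lemma --- it is imported verbatim as \cite[Corollary 6.3.4]{horn2012matrix} --- so there is no in-paper argument to compare against; but your first route (Weyl's inequalities $\lambda_k+\lambda_{\min}({\bf E})\leq\hat\lambda_k\leq\lambda_k+\lambda_{\max}({\bf E})$ via Courant--Fischer, followed by $\max\{|\lambda_{\min}({\bf E})|,|\lambda_{\max}({\bf E})|\}=\|{\bf E}\|_2$ for Hermitian ${\bf E}$) is exactly the standard derivation underlying that corollary in Horn and Johnson, and every step checks out. Your second, resolvent-based route is also sound and is worth noting for a different reason: it only uses Hermitianness of ${\bf H}$ (through $\|({\bf H}-z{\bf I})^{-1}\|_2=1/\mathrm{dist}(z,\sigma({\bf H}))$, valid for normal matrices) and nowhere needs ${\bf E}$ to be Hermitian, so it actually proves a slightly stronger Bauer--Fike-type statement; the price is that it perturbs one eigenvalue at a time rather than giving the simultaneous pairing of the full ordered spectra that Weyl's inequalities provide. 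Either route suffices for the way the lemma is used in Lemma \ref{precedmatspectrmlm}.
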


The following lemma indicates that the spectrum of the preconditioned matrix $\mathcal{P}_{\alpha}^{-1}\mathcal{YT}$ is located in a disjoint interval excluding the origin.

\cred{
\begin{lemma}\label{precedmatspectrmlm}
For $\alpha\in(0,1/2]$, we have
	\begin{equation*}
		\sigma(\mathcal{P}_{\alpha}^{-1}\mathcal{YT})\subset \left[-1-\alpha c_1,-1+\alpha c_1\right]\cup \left[1-\alpha c_1,1+\alpha c_1\right],
	\end{equation*}
	where $c_1>0$ independent of $\alpha$ is given as follows
	\begin{equation*}
	c_1=c_0\left(\sqrt{||L||_2^2+1}+1\right),
	\end{equation*} 
	with $c_0$ given in Lemma \ref{normcalphainvsqbdlm}.
\end{lemma}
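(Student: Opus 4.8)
The plan is to combine the additive decomposition \eqref{precmatdecompose}, an \emph{exact} evaluation of $\|\mathcal{R}_{\alpha}\|_2$, the uniform bound of Lemma \ref{normcalphainvsqbdlm}, and the Hermitian eigenvalue perturbation estimate of Lemma \ref{eigenperturblm}. First I would recall from \eqref{precmatdecompose} that $\mathcal{P}_{\alpha}^{-1/2}\mathcal{YT}\mathcal{P}_{\alpha}^{-1/2}=\mathcal{Q}_{\alpha}-\mathcal{E}_{\alpha}$, where $\mathcal{Q}_{\alpha}$ is real symmetric and orthogonal (Lemma \ref{qalphaorthoglm}, so $\sigma(\mathcal{Q}_{\alpha})\subset\{-1,1\}$) and $\mathcal{E}_{\alpha}$ is real symmetric, and that \eqref{ralphaesti1} already gives $\|\mathcal{E}_{\alpha}\|_2\leq\|\mathcal{C}_{\alpha}^{-1/2}\|_2^2\,\|\mathcal{R}_{\alpha}\|_2$. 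By matrix similarity $\sigma(\mathcal{P}_{\alpha}^{-1}\mathcal{YT})=\sigma(\mathcal{P}_{\alpha}^{-1/2}\mathcal{YT}\mathcal{P}_{\alpha}^{-1/2})$, so it suffices to bound the eigenvalues of $\mathcal{Q}_{\alpha}-\mathcal{E}_{\alpha}$; once $\|\mathcal{E}_{\alpha}\|_2\leq\alpha c_1$ is established, Lemma \ref{eigenperturblm} applied to the Hermitian pair $\mathcal{Q}_{\alpha}$ and $-\mathcal{E}_{\alpha}$ forces every eigenvalue of $\mathcal{Q}_{\alpha}-\mathcal{E}_{\alpha}$ to lie within distance $\alpha c_1$ of the set $\{-1,1\}$, which is exactly the claimed inclusion.

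The core step is therefore to show $\|\mathcal{R}_{\alpha}\|_2=\alpha\bigl(\sqrt{\|L\|_2^2+1}+1\bigr)$. Using the Kronecker form \eqref{eqn:C_decom} for $\mathcal{C}_{\alpha}$ and the analogous form for $\mathcal{T}$, one sees that $\mathcal{R}_{\alpha}=\mathcal{C}_{\alpha}-\mathcal{T}$ is supported only in its top-right corner: its nonzero blocks are $\alpha L$ in block position $(1,n-1)$, $-2\alpha I_m$ in $(1,n)$, and $\alpha L$ in $(2,n)$. Hence $\mathcal{R}_{\alpha}\mathcal{R}_{\alpha}^{T}$ has a single nonzero leading $2m\times 2m$ block, namely $\alpha^2\left[\begin{smallmatrix} L^2+4I_m & -2L\\ -2L & L^2\end{smallmatrix}\right]$, so $\|\mathcal{R}_{\alpha}\|_2^2=\alpha^2\lambda_{\max}$ of that block. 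Since $L$ is symmetric, simultaneously diagonalizing $L$ reduces this $2m\times 2m$ matrix to a direct sum, over the eigenvalues $\ell$ of $L$, of $\left[\begin{smallmatrix}\ell^2+4 & -2\ell\\ -2\ell & \ell^2\end{smallmatrix}\right]$, whose eigenvalues are $\ell^2+2\pm 2\sqrt{\ell^2+1}$, the larger one being $(\sqrt{\ell^2+1}+1)^2$. Taking the maximum over $\ell\in\sigma(L)$ and using $\|L\|_2=\lambda_{\max}(L)$ (as $L$ is HPD) yields the asserted identity for $\|\mathcal{R}_{\alpha}\|_2$.

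Finally, for $\alpha\in(0,1/2]$, Lemma \ref{normcalphainvsqbdlm} gives $\|\mathcal{C}_{\alpha}^{-1/2}\|_2^2\leq c_0$, so that $\|\mathcal{E}_{\alpha}\|_2\leq c_0\,\alpha\bigl(\sqrt{\|L\|_2^2+1}+1\bigr)=\alpha c_1$. Feeding this bound into the perturbation argument from the first paragraph gives $\sigma(\mathcal{P}_{\alpha}^{-1}\mathcal{YT})\subset[-1-\alpha c_1,-1+\alpha c_1]\cup[1-\alpha c_1,1+\alpha c_1]$, which is the statement.

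The step I expect to be the main obstacle is the exact norm computation in the second paragraph. The crude triangle-inequality estimate $\|\mathcal{R}_{\alpha}\|_2\leq\alpha(\|L\|_2+2)$ is immediate but too weak to produce the stated constant, so one must carry out the $2\times 2$ block diagonalization carefully, and in particular retain the off-diagonal coupling block $-2L$ in $\mathcal{R}_{\alpha}\mathcal{R}_{\alpha}^{T}$ — it is precisely this coupling that merges $\|L\|_2^2$ and the constant $1$ into the single term $\sqrt{\|L\|_2^2+1}$ rather than a separated $\|L\|_2+1$. The remaining ingredients (similarity invariance of the spectrum, the orthogonality of $\mathcal{Q}_{\alpha}$, and the Weyl-type bound) are routine given the results already proved.
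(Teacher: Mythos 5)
Your proposal is correct and follows essentially the same route as the paper's proof: similarity plus the decomposition \eqref{precmatdecompose}, the Weyl-type perturbation bound of Lemma \ref{eigenperturblm} applied to the Hermitian pair $\mathcal{Q}_{\alpha}$, $-\mathcal{E}_{\alpha}$, and the bound $\|\mathcal{E}_{\alpha}\|_2\leq c_0\|\mathcal{R}_{\alpha}\|_2\leq \alpha c_1$ via \eqref{ralphaesti1}, Lemma \ref{normcalphainvsqbdlm}, and the exact evaluation of $\|\mathcal{R}_{1}\|_2$. Your $2\times 2$ block computation of $\|\mathcal{R}_{\alpha}\|_2$ is in fact more carefully written than the paper's one-line assertion (which, as a notational slip, writes $\lambda_{\max}(\mathcal{R}_{1}\mathcal{R}_{1}^{T})$ where $\sqrt{\lambda_{\max}(\mathcal{R}_{1}\mathcal{R}_{1}^{T})}$ is meant), and arrives at the same correct constant.
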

\begin{proof}
	By matrix similarity and \eqref{precmatdecompose}, we have $\sigma(\mathcal{P}_{\alpha}^{-1}\mathcal{YT})=\sigma(\mathcal{P}_{\alpha}^{-{1}/{2}}\mathcal{Y}\mathcal{T}\mathcal{P}_{\alpha}^{-{1}/{2}})=\sigma(\mathcal{Q}_{\alpha}-\mathcal{E}_{\alpha})$. Since both $\mathcal{Q}_{\alpha}$ and $-\mathcal{E}_{\alpha}$ are Hermitian, Lemma \ref{eigenperturblm} is applicable. In other words, for $\hat{\lambda}\in\sigma(\mathcal{Q}_{\alpha}-\mathcal{E}_{\alpha})$, there exists $\lambda\in\sigma(\mathcal{Q}_{\alpha})$ such that $|\hat{\lambda}-\lambda|\leq ||\mathcal{E}_{\alpha}||_2$. By Lemma \ref{qalphaorthoglm}, we know that either $\lambda=1$ or $\lambda=-1$. That means either $|\hat{\lambda}-1|\leq ||\mathcal{E}_{\alpha}||_2$ or  $|\hat{\lambda}+1|\leq ||\mathcal{E}_{\alpha}||_2$. Thus, we have
	\begin{eqnarray}\label{precmatspectrumesti1}
		\sigma(\mathcal{P}_{\alpha}^{-1}\mathcal{YT})&=&\sigma(\mathcal{Q}_{\alpha}-\mathcal{E}_{\alpha})\\\nonumber
		&\subset&\left[-1-||\mathcal{E}_{\alpha}||_2,-1+||\mathcal{E}_{\alpha}||_2\right]\cup \left[1-||\mathcal{E}_{\alpha}||_2,1+||\mathcal{E}_{\alpha}||_2\right].
	\end{eqnarray}
	Hence, it suffices to show that $||\mathcal{E}_{\alpha}||_2\leq c_1\alpha$. By \eqref{ralphaesti1} and Lemma \ref{normcalphainvsqbdlm}, we have
	\begin{equation*}
	||\mathcal{E}_{\alpha}||_2\leq ||\mathcal{C}_{\alpha}^{-1/2}||_2^2||\mathcal{R}_{\alpha}||_2\leq c_0||\mathcal{R}_{\alpha}||_2.
	\end{equation*}
	Notice that
	\begin{equation*}
	\mathcal{R}_{\alpha}=\alpha\mathcal{R}_{1},\quad \mathcal{R}_{1}=\left[\begin{array}[c]{ccccc}
	&&&L&-2I_m\\
	&&&&L\\
	&&&&\\
	&&&&\\
	&&&&
	\end{array}\right].
	\end{equation*}
	Therefore, it is straightforward to check that
	\begin{align*}
	||\mathcal{R}_{\alpha}||_2=\alpha||\mathcal{R}_{1}||_2=\alpha\lambda_{\max}\left(\left[\begin{array}[c]{cc}
	L&-2I_m\\
	&L
	\end{array}\right]\left[\begin{array}[c]{cc}
	L&\\
	-2I_m&L
	\end{array}\right]\right)=\alpha\left(\sqrt{||L||_2^2+1}+1\right).
	\end{align*}
	Then,
	\begin{equation*}
		||\mathcal{E}_{\alpha}||_2\leq c_0	||\mathcal{R}_{\alpha}||_2=\alpha c_0\left(\sqrt{||L||_2^2+1}+1\right)=c_1\alpha.
	\end{equation*}
	The proof is complete.
\end{proof}

}

\begin{theorem}\label{maincvgthm}
	For any constant $\nu\in(0,1)$, choose $\alpha\in(0,\eta]$ with
	$$
	\cred{\eta:=\min\left\{\frac{\nu^2}{c_1},\frac{1}{2}\right\},}
	$$
	where $c_1$ is defined in Lemma \ref{precedmatspectrmlm}.
	Then, the MINRES solver for the preconditioned system (\ref{precsys}) has a convergence rate independent of both $\tau$ and $h$, i.e., 
	\begin{equation*}
		||\tilde{\bf r}_k||_2\leq 2\nu^{k-1}||\tilde{\bf r}_0||_2, \quad k\geq 1,
	\end{equation*}
	where $\tilde{\bf r}_k=\mathcal{P}_{\alpha}^{-1}\mathcal{YT}{\bf u}_k-\mathcal{P}_{\alpha}^{-1}\mathcal{Y}{\bf f}$ denotes the residual vector at $k$th MINRES iteration with ${\bf u}_k$ denoting the $k$th iterative solution by MINRES; $\tilde{\bf r}_0$ denotes the initial residual vector computed by an arbitrary real-valued initial guess ${\bf u}_0$.
\end{theorem}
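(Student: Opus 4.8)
The plan is to invoke the MINRES convergence estimate of Lemma \ref{minrescvglm} with the preconditioner ${\bf P}=\mathcal{P}_{\alpha}$ and the coefficient matrix ${\bf A}=\mathcal{YT}$. First I would check that the hypotheses of Lemma \ref{minrescvglm} are met: $\mathcal{YT}$ is symmetric by \eqref{eqn:main_system} and nonsingular because $\sigma(\mathcal{T})=\sigma(L)\subset(1,+\infty)$ and $\mathcal{Y}$ is orthogonal, while $\mathcal{P}_{\alpha}=({\mathcal{C}_\alpha^{1/2}})^{T}\mathcal{C}_\alpha^{1/2}$ is real symmetric positive definite and invertible, as already established in the proof of Lemma \ref{qalphaorthoglm}. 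It then remains to supply the spectral interval of $\mathcal{P}_{\alpha}^{-1}\mathcal{YT}$. Since $\alpha\in(0,\eta]$ with $\eta\le 1/2$, Lemma \ref{precedmatspectrmlm} applies and gives
\begin{equation*}
\sigma(\mathcal{P}_{\alpha}^{-1}\mathcal{YT})\subset[-1-\alpha c_1,-1+\alpha c_1]\cup[1-\alpha c_1,1+\alpha c_1].
\end{equation*}
Because $\alpha\le\eta\le\nu^2/c_1$ and $\nu\in(0,1)$, we have $\alpha c_1\le\nu^2<1$, so $1-\alpha c_1>0$ and the two intervals are disjoint and bounded away from the origin.

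Next I would set $a_1=a_4=1+\alpha c_1$ and $a_2=a_3=1-\alpha c_1$. Then $a_1\ge a_2>0$, $a_4\ge a_3>0$, and $a_1-a_2=a_4-a_3=2\alpha c_1$, so the symmetry hypothesis of Lemma \ref{minrescvglm} holds and the lemma yields
\begin{equation*}
||\tilde{\bf r}_k||_2\le 2\left(\frac{\sqrt{a_1a_4}-\sqrt{a_2a_3}}{\sqrt{a_1a_4}+\sqrt{a_2a_3}}\right)^{\lfloor k/2\rfloor}||\tilde{\bf r}_0||_2 = 2(\alpha c_1)^{\lfloor k/2\rfloor}||\tilde{\bf r}_0||_2,
\end{equation*}
where I have used $\sqrt{a_1a_4}=1+\alpha c_1$, $\sqrt{a_2a_3}=1-\alpha c_1$, and identified $\tilde{\bf r}_k$ with the residual ${\bf r}_k$ of Lemma \ref{minrescvglm} (taking ${\bf y}=\mathcal{Y}{\bf f}$, ${\bf x}_k={\bf u}_k$), the two differing only by a sign and hence having the same $2$-norm.

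Finally I would carry out the exponent bookkeeping. From $\alpha c_1\le\nu^2$ and $\nu\in(0,1)$ it follows that $(\alpha c_1)^{\lfloor k/2\rfloor}\le\nu^{2\lfloor k/2\rfloor}\le\nu^{k-1}$, the last inequality because $2\lfloor k/2\rfloor\ge k-1$ for every integer $k\ge 1$ (with equality when $k$ is odd) and $t\mapsto\nu^t$ is decreasing; this gives $||\tilde{\bf r}_k||_2\le 2\nu^{k-1}||\tilde{\bf r}_0||_2$. The contraction rate $\nu$ is fixed first and independently of $\tau$ and $h$; although $c_1$ (and hence $\eta$ and the admissible range of $\alpha$) depends on the discretization through $||L||_2$, the resulting rate $\nu$ does not, which is precisely the asserted mesh-independence. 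I do not anticipate a serious obstacle in this argument: all the analytic work has been absorbed into Lemma \ref{precedmatspectrmlm} (together with the uniform bound $c_0$ of Lemma \ref{normcalphainvsqbdlm}), and the only points demanding a little care are matching the symmetric two-interval inclusion to the requirement $a_1-a_2=a_4-a_3$ in Lemma \ref{minrescvglm} and the elementary floor estimate $2\lfloor k/2\rfloor\ge k-1$.
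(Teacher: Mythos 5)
Your proposal is correct and follows essentially the same route as the paper: both invoke Lemma \ref{precedmatspectrmlm} for the spectral inclusion and then apply Lemma \ref{minrescvglm} with symmetric intervals, followed by the floor estimate $2\lfloor k/2\rfloor\geq k-1$. The only cosmetic difference is that you feed the intervals $[\pm 1-\alpha c_1,\pm 1+\alpha c_1]$ directly into the MINRES bound and pass to $\nu^2$ afterwards, whereas the paper first enlarges the inclusion to $[\pm 1-\nu^2,\pm 1+\nu^2]$; the resulting bound is identical.
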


\cred{
\begin{proof}
	By $\alpha\in(0,\eta]\subset(0,1/2]$ and Lemma \ref{precedmatspectrmlm}, we know that
	\begin{align*}
		\sigma(\mathcal{P}_{\alpha}^{-1}\mathcal{YT})&\subset \left[-1-\alpha c_1,-1+\alpha c_1\right]\cup \left[1-\alpha c_1,1+\alpha c_1\right]\notag\\
		&\subset \left[-1-\frac{c_1\nu^2 }{c_1},-1+\frac{c_1\nu^2 }{c_1}\right]\cup \left[1-\frac{c_1\nu^2 }{c_1},1+\frac{c_1\nu^2 }{c_1}\right]\notag\\
		&=\left[-1-\nu^2,-1+\nu^2\right]\cup \left[1-\nu^2,1+\nu^2\right],
	\end{align*}
	which together with Lemma \ref{minrescvglm} implies that
	\begin{align*}
		||\tilde{\bf r}_k||_2&\leq 2\left(\frac{\sqrt{(1+\nu^2)^2}-\sqrt{(1-\nu^2)^2}}{\sqrt{(1+\nu^2)^2}+\sqrt{(1-\nu^2)^2}}\right)^{\lfloor k/2\rfloor}||\tilde{\bf r}_0||_2\notag\\
		&= 2\left(\nu^2\right)^{\lfloor k/2\rfloor}||\tilde{\bf r}_0||_2\notag\\
		&\leq 2\left(\nu^2\right)^{\frac{k}{2}-\frac{1}{2}}||\tilde{\bf r}_0||_2=2\nu^{k-1}||\tilde{\bf r}_0||_2.
	\end{align*}
	The proof is complete.
\end{proof}

\begin{remark}\label{remark_finalfinal}
As indicated in Theorem \ref{maincvgthm}, the convergence rate bound (i.e., $\nu$) of the MINRES solver can be taken as a constant independent of matrix size. For such a fixed $\nu$, the corresponding $\eta$ needs to be of $\mathcal{O}(\frac{1}{c_1})$, where $c_1$ defined in Lemma \ref{precedmatspectrmlm} depends on both $c_0$ and $||L||_2$. Clearly, $||L||_2$ depends on $m$ (the number of spatial grid points). Meanwhile, $c_0$ defined in Lemma \ref{normcalphainvsqbdlm}  depends on $n$ (the number of time steps). To summarize, setting $\eta = \min{\{\nu^2/{c_1}, 1/2}\}$ for some constant $\nu \in (0,1)$ and taking $\alpha\in(0,\eta]$ leads to a matrix-size independent convergence rate bound of the MINRES solver. 
\end{remark}
}

\subsection{Implementations}\label{subsec:implementationP}

We begin by discussing the computation of $\mathcal{Y}\mathcal{T}\mathbf{v}$ for any given vector $\mathbf{v}$. Since $\mathcal{T}$ is a sparse matrix, computing the matrix-vector product $\mathcal{T}\mathbf{v}$ only requires linear complexity of $\mathcal{O}(mn)$. As the action of $\mathcal{Y}$ is a simple reordering of entries that poses virtually no work, computing $\mathcal{Y}\mathcal{T}\mathbf{v}$ needs the same complexity.  Alternatively, due to the fact that $\mathcal{T}$ itself is a block Toeplitz matrix, it is well-known that $\mathcal{T}\mathbf{v}$ (and hence $\mathcal{Y}\mathcal{T}\mathbf{v}$) can be computed in $\mathcal{O}(mn \log{n})$ operations using fast Fourier transforms and the required storage is of $\mathcal{O}(mn)$.

Now, we discuss how to efficiently invert $\mathcal{P}_{\alpha}$, based on similar arguments discussed in, for instance, \cite[Section 4]{doi:10.1137/20M1316354}. The invertibility of $\mathcal{P}_{\alpha}$ is guaranteed for $\alpha\in(0,1)$, which will be proven in the next section. In each iteration of MINRES, it is required to compute $\mathcal{P}_{\alpha}^{-1}\mathbf{y}=\mathcal{C}_{\alpha}^{-{1}/{2}}{({\mathcal{C}_\alpha^{-1/2}})^{T}}\mathbf{y}$ for any given vector $\mathbf{y}$. 

Note that $\mathcal{C}_{\alpha}$ is composed of the $\alpha$-circulant matrices $B_j^{(\alpha)} = D_{\alpha}^{-1}\mathbb{F}^{*}\cred{\Lambda_{\alpha,j}}\mathbb{F}D_{\alpha},~j=1,2,$ where \cred{$\Lambda_{\alpha,j}={\rm diag}(\lambda_{j,k}^{(\alpha)})_{k=1}^{n}$ with $\lambda_{1,k}^{(\alpha)}=1+\alpha^{\frac{2}{n}}\theta_n^{2(k-1)}$ and $\lambda_{2,k}^{(\alpha)}=-2\alpha^{\frac{1}{n}}\theta_n^{k-1}.$ Also,} $D_{\alpha}={\rm diag}(\alpha^{\frac{i-1}{n}})_{i=1}^{n}$ and $\mathbb{F}=\frac{1}{\sqrt{n}}[\theta_n^{(i-1)(j-1)}]_{i,j=1}^{n}$ with $\theta_n =\exp(\frac{2\pi {\bf i}}{n})$ and ${\bf i}=\sqrt{-1}$. \cred{Considering (\ref{eqn:C_decom}) and} the eigendecomposition of $L=U \Lambda U^T$, we have
\begin{align*}
	{({\mathcal{C}_\alpha^{-1/2}})^{T}}&= { \big( \big(B_1^{(\alpha)} \otimes L + B_2^{(\alpha)} \otimes (-2I_{m}) \big)^{-1/2} \big)^T }\\
	&= { \big( \big( D_{\alpha}^{-1}\mathbb{F}^{*} \cred{\Lambda_{\alpha,1}}\mathbb{F}D_{\alpha} \otimes U \Lambda U^T + D_{\alpha}^{-1}\mathbb{F}^{*} \cred{\Lambda_{\alpha,2}}\mathbb{F}D_{\alpha} \otimes (-2I_{m}) \big)^{-1/2} \big)^{T} }\\
	&=  (D_{\alpha}^{-1}\mathbb{F}^{*} \otimes U)^{-T} { \big( \big(\cred{\Lambda_{\alpha,1}}\otimes \Lambda + \cred{\Lambda_{\alpha,2}}\otimes (-2I_{m}) \big)^{-1/2} \big)^{T} }(\mathbb{F}D_{\alpha} \otimes U^T )^{-T}.
\end{align*}
Thus, the product ${({\mathcal{C}_\alpha^{-1/2}})^{T}}\mathbf{y}$ for a given vector $\mathbf{y}$ can be implemented via the following three steps:
\begin{eqnarray*}
	&&\textrm{Step 1: Compute}~\widetilde{\mathbf{y}} =  (\mathbb{F}D_{\alpha} \otimes U^T )^{-T}\mathbf{y}; \\
	&&\textrm{Step 2: Compute}~ \widetilde{\mathbf{z}} =  { \big( \big(\cred{\Lambda_{\alpha,1}} \otimes \Lambda + \cred{\Lambda_{\alpha,2}} \otimes (-2I_{m}) \big)^{-1/2} \big)^{T} } \widetilde{\mathbf{y}};\\
	&&\textrm{Step 3: Compute}~{\mathbf{z}} =  (D_{\alpha}^{-1}\mathbb{F}^{*}\otimes U)^{-T} \widetilde{\mathbf{z}}.
\end{eqnarray*}

When the spatial grid is uniformly partitioned, \textcolor{black}{the orthogonal matrix $U$ becomes a (multi-dimension) discrete sine transform matrix $\mathbb{S}$.} In this case, both Steps 1 \& 3 can be computed efficiently via fast Fourier transforms and fast sine transforms in $\mathcal{O}(mn\log{n})$ operations. As for Step 2, the required computations take $\mathcal{O}(n m)$ operations since the matrix involved is a simple diagonal matrix.

In effect, the overall product $\mathcal{P}_{\alpha}^{-1}\mathbf{y}=\mathcal{C}_{\alpha}^{-{1}/{2}}{({\mathcal{C}_\alpha^{-1/2}})^{T}}\mathbf{y}$ can be computed by repeating the above three-step procedure two times. 

In the case where the underlying differential equation involves variable coefficients, the implementation of our proposed preconditioner can be carried out in a similar way as in the constant-coefficient case. A related discussion will be given in Section \ref{sec:numerical}.

\cred{	
Certainly, when $\alpha$ is small, computations involving $D_{\alpha}^{-1}$ may introduce stability concerns as extensively studied in \cite{Gander_Halpern_etc_2019,Wu2018}. However, as examined in \cite{doi:10.1137/19M1309869}, it appears that this does not significantly impact convergence when a Krylov subspace method is used with a properly chosen small $\alpha$ independent of the meshes. The stability implications of small $\alpha$ warrant further investigation, which we will address in our future research.}
	
	\section{Numerical examples}\label{sec:numerical}
	
	
	In this section, we demonstrate the effectiveness of our proposed preconditioner. All numerical experiments are carried out using {Octave 8.2.0 on a Dell R640 server equipped with dual Xeon Gold 6246R 16-Cores 3.4GHz CPUs, 512GB RAM running Ubuntu 20.04 LTS}. The CPU time in seconds is measured using the Octave {built-in} functions \textbf{tic/toc}. Both Steps 1 \& 3 in Section \ref{subsec:implementationP} are implemented by the functions \textbf{dst} (discrete sine transform) and \textbf{fft} (fast Fourier transform). Furthermore, \cred{the MINRES solver used is implemented using the {built-in} function \textbf{minres} on Octave}. We choose a zero initial guess and a stopping tolerance of $10^{-6}$ based on the reduction in relative residual norms for the MINRES solver tested unless otherwise indicated. 
	
	We adopt the notation MINRES-$\mathcal{P}_{\alpha}$ to represent the MINRES solver with the proposed preconditioner $\mathcal{P}_{\alpha}$.  We will test our proposed MINRES-$\mathcal{P}_{\alpha}$ method for  wave equation compare its performance against MINRES-$\mathcal{P}_{1}$ proposed in \cite{doi:10.1137/16M1062016} and MINRES-$I_{mn}$ (i.e., the non-preconditioned case). We set $\alpha$ to $10^{-6}$ for MINRES-$\mathcal{P}_{\alpha}$, a value empirically chosen based on a series of experiments aimed at optimizing performance. The stopping criterion of for all MINRES solvers tested in this section is set as $\|{\bf r}_k \|_2\leq 10^{-6}$, where ${\bf r}_k$ denotes the residual vector at $k$-th iteration step. 
	
	Denote by `Iter', the iteration numbers for solving a linear system by an iterative solver. Denote by `CPU', the computational time in unit of second. Denote by `DoF', the degree of freedom, i.e., the number of unknowns. For ease of statement, we take the same spatial step size $h$ for each spatial direction in all examples appearing in this section. We define the error as
	{
		\begin{equation*}
			{\rm E}_{\tau,h}:=\max\limits_{1\leq k\leq n}h^{d/2}\|({\bf u}^{(k)}_{\rm{iter}}-u(\mathcal{G}_h,k\tau)\|_2,
		\end{equation*}
		where $d$ is the spatial dimension, $h$ denotes the spatial step size along each spatial direction, ${\bf u}^{(k)}_{\rm{iter}}$ denotes the iterative solution of the linear system at $k$-th time level, and $u(\mathcal{G}_h,k\tau)$ denotes the values of exact solution $u(\cdot,k\tau)$ on the spatial grids.
	}
	
	{In the following two examples, as discussed, the concerned wave equations is discretized using the finite difference scheme proposed in \cite{LiLiuXiao2015}, which was shown to be unconditionally convergent with a second-order accuracy without imposing the CFL condition.}
	\begin{example}\label{examplewveq}
		{\rm
			In this example, we consider a wave equation \eqref{eqn:wave} with
			\begin{align*}
				&d=2,~\Omega=(0,1)\times (0,1),~T=1,~-\psi_1({ x})=\psi_0({ x})=x_1(x_1-1)x_2(x_2-1),\\
				&f({ x},t)=\exp(-t)(x_1({x_1}-1)x_2(x_2-1)-2(x_1(x_1-1)+x_2(x_2-1))).
			\end{align*}
			The exact solution of Example \ref{examplewveq} is given by 
			\begin{equation*}
				u({ x},t)=\exp(-t)x_1(x_1-1)x_2(x_2-1).
			\end{equation*}
			Applying the solver aforementioned for Example \ref{examplewveq}, the corresponding numerical results are presented.
			{Table \ref{explwveqtbl_con_minres_itercpu} shows} that (i) MINRES-$\mathcal{P}_{\alpha}$ is considerably more efficient than MINRES-$\mathcal{P}_{1}$ in terms of computational time and iteration number, while the accuracy of both solvers are roughly the same; (ii) the convergence rate (i.e., the iteration number) of MINRES-$\mathcal{P}_{\alpha}$ does not deteriorate as the temporal or spatial girds get refined. Evidently, the optimality of the proposed preconditioner is demonstrated, namely, the complexity of the proposed MINRES-$\mathcal{P}_{\alpha}$ is merely proportional to the complexity of the matrix-vector multiplication. 
			
			{As shown in Table \ref{explwveqtbl_con_minres_errors}, the errors by all solvers adopted are roughly the same. In fact, the errors of all solvers tested in this section are always roughly the same in each example. For this reason, we will skip the results of ${\rm E}_{\tau,h}$ in the subsequent example.}
			
			\begin{table}[H]
				{
					\begin{center}
						\caption{Iteration numbers and CPU times of MINRES-$\mathcal{P}_{\alpha}$,  MINRES-$\mathcal{P}_1$, and MINRES-$I_{mn}$ for Example \ref{examplewveq}.}\label{explwveqtbl_con_minres_itercpu}
						\setlength{\tabcolsep}{0.8em}
						{\small
							\begin{tabular}[c]{ccc|cc|cc|cc}
								\hline
								\multirow{2}{*}{$\tau$} &\multirow{2}{*}{$h$} &\multirow{2}{*}{DoF}& \multicolumn{2}{c|}{MINRES-$\mathcal{P}_{\alpha}$} &\multicolumn{2}{c}{MINRES-$\mathcal{P}_1$}&\multicolumn{2}{|c}{MINRES-$I_{mn}$}\\
								\cline{4-9}
								&&&$\mathrm{Iter}$&$\mathrm{CPU(s)}$ &$\mathrm{Iter}$&$\mathrm{CPU(s)}$ &$\mathrm{Iter}$&$\mathrm{CPU(s)}$\\
								\hline
								\multirow{4}{*}{$2^{-4}$}
								&$2^{-4}$ &3600     &2   &\cred{0.0059}   &140  &0.21    &614    &0.12  \\
								&$2^{-5}$ &15376    &2   &\cred{0.015}    &87   &0.31    &1992   &2.10  \\
								&$2^{-6}$ &63504    &2   &\cred{0.030}    &223  &1.97    &7659   &44.38 \\
								&$2^{-7}$ &258064   &2   &\cred{0.089}    &476  &12.12   &30491  &308.83\\
								\hline
								\multirow{4}{*}{$2^{-5}$}
								&$2^{-4}$ &7200     &2   &\cred{0.0089}   &146  &0.27    &1204   &0.27  \\
								&$2^{-5}$ &30752    &2   &\cred{0.019}    &332  &1.64    &2574   &5.13  \\
								&$2^{-6}$ &127008   &2   &\cred{0.049}    &710  &9.35    &7897   &57.41 \\
								&$2^{-7}$ &516128   &2   &\cred{0.16}     &797  &37.34   &29386  &564.35\\
								\hline
								\multirow{4}{*}{$2^{-6}$}
								&$2^{-4}$ &14400    &2   &\cred{0.013}    &198  &0.67    &3388   &2.75  \\
								&$2^{-5}$ &61504    &2   &\cred{0.026}    &556  &4.49    &4695   &25.13  \\
								&$2^{-6}$ &254016   &2   &\cred{0.084}    &1567 &36.79   &9331   &84.19 \\
								&$2^{-7}$ &1032256  &2   &\cred{0.31}     &6498 &625.95  &27664  &1138.95\\
								\hline
								\multirow{4}{*}{$2^{-7}$}
								&$2^{-4}$ &28800    &2   &\cred{0.017}    &236  &1.17    &11632   &22.26  \\
								&$2^{-5}$ &123008   &2   &\cred{0.049}    &804  &10.19   &13062   &87.66  \\
								&$2^{-6}$ &508032   &2   &\cred{0.014}    &3334 &151.62  &17164   &314.98  \\
								&$2^{-7}$ &2064512  &2   &\cred{0.83}     &8445 &2233.43 &34255   &2941.73 \\
								\hline
						\end{tabular}}
					\end{center}
				}
			\end{table}
			
			\begin{table}[H]
				{
					\begin{center}
						\caption{Errors of MINRES-$\mathcal{P}_{\alpha}$,  MINRES-$\mathcal{P}_1$, and MINRES-$I_{mn}$ for Example \ref{examplewveq}.}\label{explwveqtbl_con_minres_errors}
						\setlength{\tabcolsep}{0.8em}
						{\small
							\begin{tabular}{ccc|ccc}
								\hline
								\multirow{2}{*}{$\tau$}   & \multirow{2}{*}{$h$} & \multirow{2}{*}{DoF} & \multicolumn{3}{c}{${\rm E}_{\tau,h}$}                                                 \\ \cline{4-6} 
								&                      &                      & MINRES-$\mathcal{P}_{\alpha}$ & MINRES-$\mathcal{P}_{1}$ & MINRES-$I_{mn}$ \\ \hline
								\multirow{4}{*}{$2^{-4}$} & $2^{-4}$             & 3600                 & \cred{3.04e-4}                                             & 3.04e-4                  & 3.04e-4         \\
								& $2^{-5}$             & 15376                & \cred{3.04e-4}                       & 3.05e-4                  & 3.05e-4         \\
								& $2^{-6}$             & 63504                & \cred{3.05e-4}                       & 3.05e-4                  & 3.05e-4         \\
								& $2^{-7}$             & 258064               & \cred{3.05e-4}                       & 3.05e-4                  & 3.05e-4         \\ \hline
								\multirow{4}{*}{$2^{-5}$} & $2^{-4}$             & 7200                 & \cred{7.67e-5}                       & 7.70e-5                  & 7.70e-5         \\
								& $2^{-5}$             & 30752                & \cred{7.68e-5}                       & 7.71e-5                  & 7.71e-5         \\
								& $2^{-6}$             & 127008               & \cred{7.69e-5}                       & 7.71e-5                  & 7.71e-5         \\
								& $2^{-7}$             & 516128               & \cred{7.69e-5}                       & 7.71e-5                  & 7.71e-5         \\ \hline
								\multirow{4}{*}{$2^{-6}$} & $2^{-4}$             & 14400                & \cred{1.87e-5}                                                                   & 1.93e-5                  & 1.93e-5         \\
								& $2^{-5}$             & 61504                & \cred{1.88e-5}                                             & 1.93e-5                  & 1.93e-5         \\
								& $2^{-6}$             & 254016               & \cred{1.88e-5}                                                                   & 1.93e-5                  & 1.93e-5         \\
								& $2^{-7}$             & 1032256              & \cred{1.88e-5}                                             & 1.93e-5                  & 1.93e-5         \\ \hline
								\multirow{4}{*}{$2^{-7}$} & $2^{-4}$             & 28800                & \cred{3.62e-6}                                                                  & 4.83e-6                  & 4.89e-6         \\
								& $2^{-5}$             & 123008               & \cred{3.62e-6}                                             & 4.84e-6                  & 4.99e-6         \\
								& $2^{-6}$             & 508032               & \cred{3.63e-6}                                             & 4.84e-6                  & 5.03e-6         \\
								& $2^{-7}$             & 2064512              & \cred{3.63e-6}                       & 4.84e-6                  & 5.06e-6         \\ \hline
							\end{tabular}
						}
					\end{center}
				}
			\end{table}

		}
	\end{example}
	
	For the wave equation with variable diffusion coefficients given below in \eqref{eqn: var coeff wave}, we adopt the same discretization scheme as the one used in Section \ref{sec:intro}.
	\begin{equation}\label{eqn: var coeff wave}
		\left\{
		\begin{array}{lc}
			u_{tt}({ x},t) = \nabla(a({ x})\nabla u({ x},t)) + f({ x},t), & (x,t)\in \Omega \times (0,T], \\
			u = 0, & ({ x},t)\in \partial \Omega \times (0,T], \\
			u({ x},0)=\psi_0,~u_t({ x},0)=\psi_1, & x \in \Omega\subset\mathbb{R}^{d}.
		\end{array}
		\right.\,
	\end{equation} 
	where $a({ x})$ is a known positive function. Similar to \eqref{eq:main_matrix}, the following time-space linear system is obtained from \eqref{eqn: var coeff wave}:
	
	\begin{equation}\label{tssystemvarwaveeq}
		\begin{bmatrix}
			L_{a} &   &  &  & \\
			-2I_{m}  & L_{a}    & & & \\
			L_{a} &  -2I_{m}  & L_{a}  & &  \\
			&   \ddots & \ddots & \ddots &  \\
			&  &  L_{a}  & -2I_{m} & L_{a}
		\end{bmatrix}
		\left[\begin{array}{c}\mathbf{u}^{(1)}\\ \mathbf{u}^{(2)}\\ \mathbf{u}^{(3)}\\ \vdots \\ \mathbf{u}^{(n)}
		\end{array} \right]
		= \tau^2 \left[\begin{array}{c} \mathbf{f}^{(0)}/2+ \Psi_1/\tau + \Psi_0/\tau^2  \\ 
			\mathbf{f}^{(1)} - L \Psi_0 /\tau^2\\
			\mathbf{f}^{(2)} \\
			\vdots \\
			\mathbf{f}^{(n-1)}
		\end{array} \right],
	\end{equation}
	where $L_{a}=I_m-\frac{\tau^2}{2}\Delta_{a,h}\in\mathbb{R}^{m\times m}$ and $\Delta_{a,h}$ is the central difference discretization of $ \nabla a({ x})\nabla$. Clearly, the proposed preconditioner for the linear system \eqref{tssystemvarwaveeq} is not fast invertible since the spatial matrix cannot be diagonalized by the fast sine transform. To remedy this situation, we consider replacing the diffusion coefficient function $a$ with $\bar{a}$ (i.e., the mean value of $a$ on the spatial grid) and hence the time space matrix becomes
	\begin{equation}\label{mean-coefftsmat}
		\begin{bmatrix}
			L_{\bar{a}} &   &  &  & \\
			-2I_{m}  & L_{\bar{a}}    & & & \\
			L_{\bar{a}} &  -2I_{m}  & L_{\bar{a}}  & &  \\
			&   \ddots & \ddots & \ddots &  \\
			&  &  L_{\bar{a}}  & -2I_{m} & L_{\bar{a}}
		\end{bmatrix},
	\end{equation}
	where $L_{\bar{a}}=I_m-\frac{\bar{a}\tau^2}{2}\Delta_{h}\in\mathbb{R}^{m\times m}$ with $\Delta_{h}$ denoting the discretization matrix for the constant  Laplacian operator. Then, both our proposed preconditioner $\mathcal{P}_{\alpha}$ and $\mathcal{P}_1$ for the matrix \eqref{mean-coefftsmat} are fast invertible. Thus, we use them as the preconditioners for \eqref{tssystemvarwaveeq} and test their performance on Example \ref{varwaveeq}.
	
	\begin{example}\label{varwaveeq}
		{\rm 
			In this example, we consider the wave equation \eqref{eqn: var coeff wave} with 
			\begin{align*}
				&d=2,~\Omega=(0,1)\times(0,1),~T=1,~\psi_1({ x})=\psi_0({ x})=x_2(1-x_2)x_1(1-x_1),\\
				&a({ x})=(30+\sin(x_1)^2)(30+\sin(x_2)^2),\\
				&f({ x},t)=\exp(t)[x_1(1-x_1)x_2(1-x_2)-\sin(2x_1)(30+\sin(x_2)^2)(1-2x_1)x_2(1-x_2)\\
				&\quad\quad\quad-\sin(2x_2)(30+\sin(x_1)^2)(1-2x_2)x_1(1-x_1)+2a({ x})(x_1(1-x_1)+x_2(1-x_2))].
			\end{align*}
			
			We apply MINRES-$\mathcal{P}_{\alpha}$ and MINRES-$\mathcal{P}_1$ for Example \ref{varwaveeq}. {Tables \ref{explvarwveqtbl_var_wave_itercpu} shows} that (i) MINRES-$\mathcal{P}_{\alpha}$ is much more efficient than MINRES-$\mathcal{P}_{1}$ in terms of iteration number and computational time while accuracy of the both solvers are roughly the same; (ii) iteration number of MINRES-$\mathcal{P}_{\alpha}$ is stable with respect to changes of $\tau$ and $h$.

			\begin{table}[H]
				{
					\begin{center}
						\caption{Iteration numbers and CPU times of MINRES-$\mathcal{P}_{\alpha}$, MINRES-$\mathcal{P}_1$, and MINRES-$I_{mn}$ for Example \ref{varwaveeq}.}\label{explvarwveqtbl_var_wave_itercpu}
						\setlength{\tabcolsep}{0.8em}
						{\small
							\begin{tabular}[c]{ccc|cc|cc|cc}
								\hline
								\multirow{2}{*}{$\tau$} &\multirow{2}{*}{$h$} &\multirow{2}{*}{DoF}& \multicolumn{2}{c|}{MINRES-$\mathcal{P}_{\alpha}$} &\multicolumn{2}{|c}{MINRES-$\mathcal{P}_1$}&\multicolumn{2}{|c}{MINRES-$I_{mn}$}\\
								\cline{4-9}
								&&&$\mathrm{Iter}$&$\mathrm{CPU(s)}$ &$\mathrm{Iter}$&$\mathrm{CPU(s)}$ &$\mathrm{Iter}$&$\mathrm{CPU(s)}$\\
								\hline
								\multirow{4}{*}{$2^{-4}$}
								&$2^{-4}$ &3600     &8   &\cred{0.015}    &415  &0.54    &3046    &0.62  \\
								&$2^{-5}$ &15376    &8   &\cred{0.032}    &1719 &6.31    &12891   &12.84 \\
								&$2^{-6}$ &63504    &8   &\cred{0.084}    &7100 &62.41   &52655   &301.21\\
								&$2^{-7}$ &258064   &8   &\cred{0.24}     &28911&711.78  &$>$200000 &-\\
								\hline
								\multirow{4}{*}{$2^{-5}$}
								&$2^{-4}$ &7200     &8   &\cred{0.021}    &342  &0.61    &5879   &1.52  \\
								&$2^{-5}$ &30752    &8   &\cred{0.045}    &1275 &6.52    &24428  &47.58  \\
								&$2^{-6}$ &127008   &8   &\cred{0.11}     &5002 &63.61   &88205  &563.18 \\
								&$2^{-7}$ &516128   &8   &\cred{0.44}     &20072&904.18  &$>$200000&-\\
								\hline
								\multirow{4}{*}{$2^{-6}$}
								&$2^{-4}$ &14400    &8   &\cred{0.031}    &122  &0.41    &9428   &8.09  \\
								&$2^{-5}$ &61504    &8   &\cred{0.074}    &434  &3.48    &38225  &195.13\\
								&$2^{-6}$ &254016   &8   &\cred{0.23}     &1709 &38.11   &154255 &1416.21 \\
								&$2^{-7}$ &1032256  &9   &\cred{0.99}     &6907 &661.72  &$>$200000&-\\
								\hline
								\multirow{4}{*}{$2^{-7}$}
								&$2^{-4}$ &28800    &10   &\cred{0.055}   &134  &0.67    &10752   &21.37  \\
								&$2^{-5}$ &123008   &10   &\cred{0.15}    &186  &2.37    &45062   &282.98  \\
								&$2^{-6}$ &508032   &10   &\cred{0.55}    &270  &11.94   &182323  &3262.76  \\
								&$2^{-7}$ &2064512  &10   &\cred{3.06}    &976  &264.01  &$>$200000 &- \\
								\hline
						\end{tabular}}
					\end{center}
				}
			\end{table}	
		}
	\end{example}

	\section{Conclusions}\label{sec:conclusions}
		We have proposed a novel PinT preconditioner $\mathcal{P}_{\alpha}$ for the symmetrized all-at-once system of wave equations, $\mathcal{Y} \mathcal{T} \mathbf{u} = \mathcal{Y}\mathbf{f}$, whose effectiveness is supported by both theory and numerical evidence. Our preconditioner does not only extend the scope of application of the block $\alpha$ circulant preconditioning technique to MINRES solver, but also significantly improves the performance of the ABC preconditioner proposed in \cite{doi:10.1137/16M1062016}. As shown in the numerical tests, our proposed preconditioner improves significantly the original ABC preconditioner proposed in \cite{doi:10.1137/16M1062016} in terms of both CPU time and iteration number. \cred{The numerical evidence also substantiates the advantages of our proposed strategy, particularly in scenarios where the spatial grid is uniformly partitioned.}

	\section*{Acknowledgments}
	The work of Sean Hon was supported in part by the Hong Kong RGC under grant 22300921, a start-up {grant} from the Croucher Foundation, and a Tier 2 Start-up Grant from Hong Kong Baptist University. The work of Xue-lei Lin was supported by by research grants: 2021M702281 from China Postdoctoral Science Foundation; HA45001143, HA11409084 two start-up Grants from Harbin Institute of Technology, Shenzhen.

	\bibliographystyle{plain}

	\begin{appendices}
	\cred{
	\section{Proof of Lemma \ref{calphanonsinglm}}
				\begin{proof}
					When $\alpha=0$, $\mathcal{C}_{\alpha}=\mathcal{T}$. In such case, $\sigma(\mathcal{C}_{\alpha})=\sigma(\mathcal{T})=\sigma(L)\subset(1,+\infty)\subset\mathbb{C}\setminus(-\infty,0]$. Hence, $\mathcal{C}_{0}\in\mathcal{Q}(mn)$.
					
					It remains to focus on the case $\alpha\in(0,1)$. We recall from \eqref{calphaprediagform} that $$\sigma(\mathcal{C}_{\alpha})=\sigma(\Lambda_{\alpha,1}\otimes L+\Lambda_{\alpha,2}\otimes I_m).$$ Hence, for any $\mu\in\sigma(\mathcal{C}_{\alpha})$ ($\alpha\in(0,1)$), there exists $k\in\{1,2,...,n\}$ and $\lambda\in\sigma(L)\subset(1,+\infty)$ such that
					\begin{align*}
					\mu&=\lambda \lambda_{1,k}^{(\alpha)}+\lambda_{2,k}^{(\alpha)}\\
					&=\lambda-2\alpha^{\frac{1}{n}}\theta_n^{(k-1)}+\lambda\alpha^{\frac{2}{n}}\theta_n^{2(k-1)}\\
					&=\lambda-2\alpha^{\frac{1}{n}}\cos(\omega_k)+\lambda\alpha^{\frac{2}{n}}\cos(2\omega_k)-2\alpha^{\frac{1}{n}}{\bf i}\sin(\omega_k)+\lambda\alpha^{\frac{2}{n}}{\bf i}\sin(2\omega_k)\\
					&=\lambda-2\alpha^{\frac{1}{n}}\cos(\omega_k)+\lambda\alpha^{\frac{2}{n}}(2\cos^2(\omega_k)-1)+{\bf i}2\alpha^{\frac{1}{n}}\sin(\omega_k)[\lambda\alpha^{\frac{1}{n}}\cos(\omega_k)-1]\\
					&=\lambda(1-\alpha^{\frac{2}{n}})+2\alpha^{\frac{1}{n}}\cos(\omega_k)[\lambda\alpha^{\frac{1}{n}}\cos(\omega_k)-1]+{\bf i}2\alpha^{\frac{1}{n}}\sin(\omega_k)[\lambda\alpha^{\frac{1}{n}}\cos(\omega_k)-1],
					\end{align*}
					where $\omega_k=\frac{2\pi(k-1)}{n}$.
					We now show $\mu\in\mathbb{C}\setminus(-\infty,0]$ case by case.
					
					If $\cos(\omega_k)< 0$, then
					\begin{equation*}
						\Re(\mu)=   \lambda(1-\alpha^{\frac{2}{n}})+2\alpha^{\frac{1}{n}}\cos(\omega_k)[\lambda\alpha^{\frac{1}{n}}\cos(\omega_k)-1]>\lambda(1-\alpha^{\frac{2}{n}}) > 0,
					\end{equation*}
					which implies $\mu\in\mathbb{C}\setminus(-\infty,0]$.
					
					If $\cos(\omega_k)=0$, then
					\begin{equation*}
						\Re(\mu)=\lambda(1-\alpha^{\frac{2}{n}})> 0,
					\end{equation*}
					which  implies $\mu\in\mathbb{C}\setminus(-\infty,0]$.
					
					If $\frac{1}{\lambda \alpha^{\frac{1}{n}}}>\cos(\omega_k)>0$ and $\cos(\omega_k)\neq 1$, then $\Im(\mu)\neq 0$.  If $\cos(\omega_k)=1$, then $\Re(\mu)=\lambda(1-\alpha^{\frac{2}{n}})+2\alpha^{\frac{1}{n}}(\lambda\alpha^{\frac{1}{n}}-1 )=(\alpha^{\frac{1}{n}}-1)^2+(\lambda-1)(\alpha^{\frac{2}{n}}+1)>0$. That means $\mu\in\mathbb{C}\setminus(-\infty,0]$ when $\frac{1}{\lambda \alpha^{\frac{1}{n}}}>\cos(\omega_k)>0$.

						If $\cos(\omega_k)=\frac{1}{\lambda \alpha^{\frac{1}{n}}}$, then $\mu=\lambda(1-\alpha^{\frac{2}{n}})>0$, which  implies $\mu\in\mathbb{C}\setminus(-\infty,0]$.
					
					If  $\cos(\omega_k)>\frac{1}{\lambda \alpha^{\frac{1}{n}}}$, then $\Re(\mu)=\lambda(1-\alpha^{\frac{2}{n}})+2\alpha^{\frac{1}{n}}\cos(\omega_k)[\lambda\alpha^{\frac{1}{n}}\cos(\omega_k)-1]>0$,  which  implies $\mu\in\mathbb{C}\setminus(-\infty,0]$.
					
					Hence, if $\alpha\in(0,1)$, then for any $\mu\in\sigma(\mathcal{C}_{\alpha})$ it holds $\mu\in\mathbb{C}\setminus(-\infty,0]$. In other words, when $\alpha\in(0,1)$, it holds that $\mathcal{C}_{\alpha}\in \mathcal{Q}(mn)$.
					
					To summarize, for any $\alpha\in[0,1)$, it holds that $\mathcal{C}_{\alpha}\in \mathcal{Q}(mn)$. The proof is complete.
				\end{proof}
				}

			\end{appendices}

		\end{document}